\date{\today}
\newtheorem{theorem}{Theorem}[section]
\newtheorem{proposition}[theorem]{Proposition}%[section]
\newtheorem{corollary}[theorem]{Corollary}%[section]
\newtheorem{lemma}[theorem]{Lemma}%[section]
\theoremstyle{definition}
\newtheorem{definition}[theorem]{Definition}%[section]
\newtheorem{remark}[theorem]{Remark}%[section]
\newtheorem{example}[theorem]{Example}%[section]
\newcommand{\R}{\mathbb R}
\newcommand{\RR}{\overline{\mathbb R}}
\newcommand{\E}{\mathbb E}
\newcommand{\ld}[3]{#2^{(#1)}_{-} #3}
\newcommand{\lsubd}[2]{\mathrm{\partial}^{(#1)}_{-} #2}
\newcommand{\subd}[2]{{\rm \partial} #1 #2}
\newcommand{\sld}[2]{#1^{\pr\pr}_{-} #2}
\newcommand{\gin}[3]{#2^{[#1]}_{-} #3}
\newcommand{\dini}[1]{#1^{\ell}}
\newcommand{\pr}{\prime}
\newcommand{\eps}{\varepsilon}
\newcommand{\f}{f : \E\to\R\cup\{+\infty\}}
\newcommand{\norm}[1]{\Vert#1\Vert}
\newcommand{\dom}[1]{{\rm dom}\,#1} 
\newcommand{\ph}{\varphi}
\title{Potentialities of Nonsmooth Optimization}
\author{\footnotesize V. I. IVANOV\thanks{Department of Mathematics, Technical University of Varna, Bulgaria. Email: vsevolod.ivanov@tu-varna.bg}}
\date{}
\begin{document}
\maketitle

%\centerline{Abbreviated title: Higher-Order Optimality Conditions}
%
\begin{abstract}
In this paper, we show that higher-order optimality conditions can be obtain for arbitrary nonsmooth function.
We introduce a new higher-order directional derivative and higher-order subdifferential of Hadamard type of a given proper extended real function. This derivative is consistent with the classical higher-order Fr\'echet directional derivative in the sense that both derivatives of the same order coincide if the last one exists.  We obtain necessary and sufficient conditions of order $n$ ($n$ is a positive integer) for a local minimum and isolated local minimum of order $n$ in terms of these derivatives and subdifferentials. We do not require any restrictions on the function in our results. A special class $\mathcal F_n$ of functions is defined and optimality conditions for isolated local minimum of order $n$ for a function $f\in\mathcal F_n$ are derived. The derivative of order $n$ does not appear in these characterizations.  We prove necessary and sufficient criteria such that every stationary point of order $n$ is a global minimizer. We compare our results with some previous ones. %All these results concern an arbitrary nondifferentiable function.

 \end{abstract}

{\bf Key words:} nonsmooth optimization, higher-order directional derivatives of Hadamard type, higher-order subdifferentials, necessary and sufficient conditions for  optimality, isolated minimizers, strict local minimizers,  generalized convex functions.

{\bf AMS subject classifications:} 49K10, 90C46, 26B05, 26B25

\section{Introduction}
\setcounter{equation}{0}
\label{s1}

One of the main tasks of nondifferentiable optimization is to extend some optimality conditions to more general classes of nondifferentiable functions.  There are necessary and sufficient conditions in unconstrained optimization  in terms of various generalized derivatives. %, which concern a lot of  classes of functions: locally Lipschitz, lower semicontinuous and so on functions. 
The most of them are of first- and second-order. The  higher-order conditions are rather limited.
Such results were obtained in \cite{aub90,gin02,ban01,hof78,jim08,lin82,pal91,stu86,stu99,war94}. 
Even the conditions of first- and second-order  are satisfied for restricted classes of functions when we apply the known directional derivatives: locally Lipschitz, continuously differentiable, lower semicontinuous, the class C$^{1,1}$, and so on functions. Consider, for example, the lower  Dini directional derivative, which is one of the most simple and popular ones. The higher-order necessary conditions hold for an arbitrary nondifferentiable function, but the sufficient ones do not. Recently,  Bedna\v r\'ik and Pastor \cite{pas08} introduced a new class of $\ell$--stable functions and generalized some  second-order sufficient criteria applying these objects. The class of $\ell$-stable functions includes the functions with locally Lipschitz gradient. %On the other hand, the sufficient optimality conditions do not hold if the function is not $\ell$-stable. 

These facts motivate us to search for a new directional derivative such that higher-order optimality conditions in terms of it are satisfied for an arbitrary  function. In our opinion, the following question is important for nondifferentiable optimization: Are there any directional derivatives such that both the necessary conditions for a local minimum and the sufficient ones are satisfied for any function, not necessarily differentiable. In particular, these derivatives should extend the classical Fr\'echet derivatives.

In this paper, we introduce a new generalized directional derivative of order $n$ ($n$ is a positive integer) such that the necessary conditions for optimality and the sufficient ones hold for arbitrary not necessarily differentiable function. We obtain necessary conditions for a local minimum, sufficient ones for a strict local minimum, and complete characterizations of isolated local minimizers of order $n$ ($n$ is a positive integer) in terms of this derivative.  We derive our criteria for arbitrary proper extended real functions. The convergence in the definition of the derivatives  is of Hadamard type.
%We should mention that optimality conditions for isolated local minima of order $n$ were obtained by Studniarski \cite{stu86}, but in contrast of our derivative, the derivatives of lower order do not appear in the formulation of the derivative of order $n$. 
We introduce a subdifferential of order $n$ and apply it in the optimality criteria. We additionally prove conditions of order $n$, which are both necessary and sufficient for a given point to be a global minimizer. They concern a new class of invex functions of order $n$.
%We prove first-order necessary and sufficient conditions for a given point to be isolated minimum of second-order of a strongly pseudoconvex function. We suppose in the respective result that the function belongs to the class C$^{1,1}$. An example shows that this result is not satisfied for C$^1$ functions. After that 
We introduce another new class of functions that we denote by $\mathcal F_n$, where $n$ is a positive integer such that $n\ge 2$. We obtain necessary and sufficient conditions for a point $\bar x$ to be an isolated minimizer of order $n$ of a function $f\in\mathcal F_n$, which are quite different from the case of an arbitrary function. The derivative and the subdifferential of order $n$ of the given function do not appear in our criteria for an isolated local minimizer of order $n$.  At last, we compare our optimality conditions with some known results. We show that some of the theorems by Huang and Ng \cite{hua94} and Chaney \cite{cha87}, which concern the second-order derivative of Chaney follow from our optimality conditions. The main result of Ben-Tal and Zowe \cite{bt85} also is a consequence of our results. Higher-order necessary and higher-order sufficient conditions for an isolated minimum were obtained by Studniarski \cite{stu86}. In his paper, Studniarski applied some derivatives of Hadamard type, which were introduced by him. Other higher-order derivatives of Hadamard type are studied in \cite{aub90,gin02,ban01}. In contrast of our derivatives, the derivatives in \cite{aub90,gin02,stu86} are not consistent with the classical Fr\'echet derivatives. Some of their conditions for optimality do not concern arbitrary functions like our ones.
%The paper is organiged as follows: In Section \ref{s2}, we introduce the higher-order derivative and subdifferential. We prove here that the derivative of order $n$ coincides with Fr\'echet directional derivative of order $n$

%\section{Preliminaries}
\section{Higher-order directional derivatives and subdifferentials of Ha\-da\-mard type}
\label{s2}
\setcounter{equation}{0}
In this paper, we suppose that $\E$ is a real finite-dimensional Euclidean space. Denote by $\R$ the set of reals and $\RR=\R\cup\{-\infty\}\cup\{+\infty\}$.
% is the extended real line that includes $+\infty$ and $-\infty$. 
Let $X$ and $Y$ be two linear spaces and $L(X,Y)$ be the space of all continuous linear operators from $X$ to $Y$. Then denote by $L^1(\E)$ the space $L(\E,\R)$, by $L^2(\E)$ the space $L(\E,L^1(\E))$ and so on. If $n$ is an arbitrary positive integer such that $n>1$, let $L^n(\E)$ be the linear space $L(\E,L^{n-1}(\E))$.  Consider a proper extended real function $\f$, that is a function, which never takes the value $-\infty$. The domain of a proper extended real function is the set:
\[
{\rm dom}\; f:=\{x\in\E\mid f(x)<+\infty\}.
\]
%If $x\in L(\E)$, $u\in\E$, then we denote $x(u)$ by $\scalpr{x}{u}$.

\begin{definition}\label{def1}
The lower Hadamard directional derivative of a function $\f$
at a point $x\in\dom f$ in direction $u\in\E$ is defined as follows:
%\[
%\ld f (x;u)=\liminf_{\genfrac{}{}{0pt}{2}{t\downarrow 0,u^\pr\to u,}{x+t u^\pr\in S}}\,
%t^{-1}[f(x+t u^\pr)-f(x)].
%\]
\[
\ld 1 f(x;u)=\liminf_{t\downarrow 0,u^\pr\to u}\,
t^{-1}[f(x+t u^\pr)-f(x)].
\]
Here $t$ tends to 0 with positive values, and $u^\pr\to u$ implies that the norm $\norm{u^\pr-u}$ approaches $0$.
\end{definition}

%It follows from this definition that, if $\ld 1 f(x;u)$ is finite, then the 
%direction $u$ belongs to the Bouligand tangent cone of the domain of the function $f$.

\begin{definition}
Recall that the lower Hadamard subdifferential of a function $\f$ at some point
$x\in\dom f$ is defined  by the following relation:
\[
\lsubd 1 f(x)=\{ x^*\in L^1(\E)\mid x^*(u)\le\ld 1 f (x;u)\quad\textrm{for all directions}\quad u\in\E\}.
\]
\end{definition}

We introduce the following definitions:

\begin{definition}
Let $\f$ be an arbitrary proper extended real function.
Suppose that $x^*_1$ is a fixed element from the lower Hadamard subdifferential
$\lsubd 1 f (x)$ at the point $x\in\dom f$. Then the lower
second-order derivative of Hadamard type of $f$ at $x\in\dom f$ in direction $u\in\E$ is
defined as follows:
\[
\ld 2 f(x;x^*_1;u)=\liminf_{t\downarrow 0,u^\pr\to u}\,
2t^{-2}[f(x+t u^\pr)-f(x)-tx^*_1(u^\pr)].
\]
\end{definition}

\begin{definition}
Let $\f$ be an arbitrary proper extended real function. Suppose that $x\in\dom f$, $x^*_1\in\lsubd 1 f(x)$.
The lower second-order Hadamard subdifferential of the function $\f$ at the point
$x\in\dom f$ is defined  by the following relation:
\[
\lsubd 2 f(x;x^*_1)=\{ x^*\in L^2(\E)\mid x^*(u)(u)\le\ld 2 f (x;x^*_1;u)\quad\textrm{for all directions}\quad u\in\E\}.
\]
\end{definition}

\begin{definition}\label{def3}
Let $\f$ be an arbitrary proper extended real function, and $n$ be any positive integer. Suppose that the lower Hadamard subdifferential 
\[
\lsubd i f (x;x_1^*,x_2^*,\dots,x_{i-1}^*),\quad i=1,2,\dots, n-1
\]
of order $i$ at the point $x\in\dom f$ is nonempty and $x^*_i$ is a fixed point from it. 
Then the lower derivative of Hadamard type  of order $n$ of $f$ at $x\in\dom f$ in direction $u\in\E$ is defined as follows:
\[
\ld n f(x;x^*_1,x^*_2,\dots,x^*_{n-1};u)=\liminf_{t\downarrow 0,u^\pr\to u}\,\Delta_n,
\]
where
\[
\Delta_n=
n!\, t^{-n}\, [f(x+t u^\pr)-f(x)-\sum_{i=1}^{n-1}\frac{t^i}{i!}\, x^*_i\underbrace{(u^\pr)(u^\pr)\dots (u^\pr)}_{i-\text{times}}].
\]
This derivative is well defined as element of $\bar\R$, because only the term $f(x+t u^\pr)$ can be infinite in the expression for $\Delta_n$.
\end{definition}

\begin{definition}\label{def4}
Suppose that $\f$ is an arbitrary proper extended function, and $n$ is any positive integer.
Let $x^*_i$ be a fixed point from the lower Hadamard subdifferential
$\lsubd i f (x;x_1^*,x_2^*,\dots,x_{i-1}^*)$, $i=1,2,\dots, n-1$ of order $i$ at the point $x\in\dom f$. Then the lower
subdifferential of Hadamard type  of order $n$ of $f$ at $x\in\dom f$  is defined as follows:
\[
\lsubd n f(x;x^*_1,x^*_2,\dots,x^*_{n-1})=\{ x^*\in L^n(\E)\mid x^*\underbrace{(u)(u)\dots (u)}_{n-\text{times}}
\]
\[
\le\ld n f (x;x^*_1,x^*_2,\dots,x^*_{n-1};u),\;\forall u\in\E\}.
\]
\end{definition}

The essence of the next result  is that the  derivatives, defined in Definition \ref{def3}
generalize the usual classical ones in contrast of the derivative in \cite{stu86} and a lot of other derivatives.

\begin{proposition}\label{pr3}
Let the function $\f$ have Fr\'echet derivatives
\[
\nabla f(y),\nabla^2 f(y),\dots,\nabla^{n-1} f(y)
\]
at each point $y\in\E$ from some neighborhood of the point $x\in\E$, and let
there exists the n-th order Fr\'echet derivative $\nabla^n f(x).$
Then the lower derivatives of order $m$ exist for every integer $m$ such that $1\le m\le n$ and we have the following relations:
\[
\ld 1 f(x;u)=\nabla f(x)(u);\quad
\lsubd 1 f(x)=\{\nabla f(x)\};
\]
\[
\ld m f(x;\nabla f(x),\nabla^2 f(x),\dots,\nabla^{m-1} f(x);u)=\nabla^m f(x)\underbrace{(u)(u)\dots (u)}_{m-\text{times}},\quad m=2,3,\dots, n;
\]
\begin{equation}\label{28}
\nabla^m f(x)\in\lsubd m f(x;\nabla f(x),\nabla^2 f(x),\dots,\nabla^{m-1} f(x)) ,\quad m=2,3,\dots, n.
\end{equation}
\end{proposition}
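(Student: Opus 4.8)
The plan is to argue by induction on the order $m$, with $1\le m\le n$, establishing two statements at each step: that the lower derivative of order $m$ evaluated at the classical derivatives equals $\nabla^m f(x)(u)\cdots(u)$ ($m$ times), and that $\nabla^m f(x)$ belongs to $\lsubd m f(x;\nabla f(x),\dots,\nabla^{m-1}f(x))$. Carrying the membership claim through the induction is essential, since it is exactly what makes the next derivative well defined: to form $\ld m f(x;\nabla f(x),\dots,\nabla^{m-1}f(x);u)$ one needs each $\nabla^i f(x)$ to be an admissible element of $\lsubd i f(x;\nabla f(x),\dots,\nabla^{i-1}f(x))$ for $i<m$. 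The single analytic tool throughout is Taylor's formula with Peano remainder, available because $f$ has Fr\'echet derivatives up to order $m-1$ throughout a neighborhood of $x$ and an $m$-th Fr\'echet derivative at $x$ (both guaranteed by the hypotheses for each $m\le n$).

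For the base case $m=1$ I would substitute the first-order expansion $f(x+tu^\pr)-f(x)=t\,\nabla f(x)(u^\pr)+o(t\norm{u^\pr})$ into the difference quotient, obtaining $t^{-1}[f(x+tu^\pr)-f(x)]=\nabla f(x)(u^\pr)+o(1)$ as $(t,u^\pr)\to(0,u)$. Since $\nabla f(x)$ is a continuous linear functional, $\nabla f(x)(u^\pr)\to\nabla f(x)(u)$, so the $\liminf$ is in fact a genuine limit equal to $\nabla f(x)(u)$, which is the first identity. The description $\lsubd 1 f(x)=\{\nabla f(x)\}$ then follows by applying the defining inequality $x^*(u)\le\ld 1 f(x;u)=\nabla f(x)(u)$ to both $u$ and $-u$, which forces $x^*=\nabla f(x)$.

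For the inductive step, assume both claims for all orders below $m$; then $\ld m f(x;\nabla f(x),\dots,\nabla^{m-1}f(x);u)$ is well defined. I would insert the $m$-th order Taylor expansion, written as $f(x+h)=f(x)+\sum_{i=1}^{m}\frac{1}{i!}\nabla^i f(x)(h)\cdots(h)+R(h)$ with $R(h)=\norm{h}^m\rho(h)$ and $\rho(h)\to0$ as $h\to0$, into the bracket defining $\Delta_m$. Setting $h=tu^\pr$ and using multilinearity to extract $t^i$ from the $i$-th term, the contributions $i=1,\dots,m-1$ cancel exactly against the sum subtracted in $\Delta_m$, leaving $\frac{t^m}{m!}\nabla^m f(x)(u^\pr)\cdots(u^\pr)+R(tu^\pr)$ in the bracket. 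Multiplying by $m!\,t^{-m}$ then gives $\Delta_m=\nabla^m f(x)(u^\pr)\cdots(u^\pr)+m!\,\norm{u^\pr}^m\rho(tu^\pr)$.

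It remains to pass to the limit, the one delicate point because Hadamard convergence is joint in $(t,u^\pr)$. The Peano form is precisely what handles this: as $(t,u^\pr)\to(0,u)$ the argument $h=tu^\pr\to0$ along every path, so $\rho(tu^\pr)\to0$ while $\norm{u^\pr}^m$ stays bounded, whence the remainder term vanishes; meanwhile $\nabla^m f(x)(u^\pr)\cdots(u^\pr)\to\nabla^m f(x)(u)\cdots(u)$ since the $m$-linear form depends continuously (indeed polynomially) on $u^\pr$. Thus $\Delta_m$ has a genuine limit, so its $\liminf$ equals $\nabla^m f(x)(u)\cdots(u)$, proving the derivative identity of order $m$. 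Because this holds for every direction $u$, the defining inequality $x^*(u)\cdots(u)\le\ld m f(x;\nabla f(x),\dots,\nabla^{m-1}f(x);u)$ is met with equality by $x^*=\nabla^m f(x)$, so $\nabla^m f(x)\in\lsubd m f(x;\nabla f(x),\dots,\nabla^{m-1}f(x))$, which closes the induction and yields \eqref{28}. I expect the only real obstacle, the uniformity of the remainder in the joint limit, to dissolve once it is written in this path-independent Peano form.
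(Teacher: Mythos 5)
Your proposal is correct and follows essentially the same route as the paper's own proof: induction on the order $m$, substituting Taylor's expansion with Peano remainder into $\Delta_m$, observing that the lower-order terms cancel and that the $\liminf$ is a genuine limit, and then reading off the subdifferential membership from Definition \ref{def4}. You are in fact somewhat more careful than the paper on two points — writing the remainder as $\norm{h}^m\rho(h)$ so the joint Hadamard limit is handled explicitly, and proving $\lsubd 1 f(x)=\{\nabla f(x)\}$ via the $\pm u$ argument rather than citing it as well known — but these are refinements of the same argument, not a different one.
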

%$\ld n f(x_0,u)$ exists for each direction $u\in\E$, and it coincides^
%with the respective Fr\'echet directional derivative, that is
%$\ld n f(x_0,u)$ = $f^{(n)}(x_0)(u,\dots,u)$.
%\end{theorem}
\begin{proof}
The first-order relations are well known, because they concern the Hadamard directional derivative.

We prove by induction the relations of order $m>1$. Suppose that they are satisfied for every positive integer $k<m$. It follows from here that 
\[
\ld m f(x;\nabla f(x),\nabla^2 f(x),\dots,\nabla^{m-1} f(x);u)
\]
is well defined. By Taylor's expansion formula with a reminder in the form of Peano \cite{il82} we have
\[
f(x+t u^\pr)=f(x)+\sum_{i=1}^m\,\frac{1}{i!}\, \nabla^i f(x)\underbrace{(tu^\pr)(tu^\pr)\dots (tu^\pr)}_{i-\text{times}}]+o(t^m),
\]
where $o(h)$ is a function such that $\lim_{h\to 0}\, o(h)/h=0$.
Then we conclude from Definition \ref{def3} that
\[
\ld m f(x;\nabla f(x),\nabla^2 f(x),\dots,\nabla^{m-1}f(x);u)
\]
\[
=\liminf_{t\downarrow 0,u^\pr\to u}\,[\nabla^m f(x)\underbrace{(u^\pr)(u^\pr)\dots (u^\pr)}_{m-\text{times}}+o(t^m)/t^m]=\nabla^m f(x)\underbrace{(u)(u)\dots (u)}_{m-\text{times}}.
\]
By Definition \ref{def4} we obtain that Inclusions (\ref{28}) are satisfied.
\end{proof}

\section{Conditions for a local minimum}
\label{s3}
\setcounter{equation}{0}
\begin{theorem}\label{th1}
Let $\bar x\in\dom f $ be a local minimizer of the proper extended real function $\f$. Then
\begin{equation}\label{14}
0\in\lsubd 1 f(\bar x),\quad 0\in\lsubd n f(\bar x;\underbrace{0,0,\dots ,0)}_{(n-1)-times}\quad\text{for all}\quad n=2,3,4,\dots
\end{equation}
\end{theorem}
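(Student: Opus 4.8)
The plan is to argue by induction on $n$, at each stage simultaneously verifying that the zero functional lies in the subdifferential of the current order and that this keeps the construction of the next order legitimate. The decisive simplification is that once all the fixed elements $x_1^*,\dots,x_{n-1}^*$ are taken to be zero, every summand in the expression for $\Delta_n$ from Definition \ref{def3} drops out, leaving
\[
\Delta_n = n!\, t^{-n}\,[f(\bar x+t u^\pr)-f(\bar x)].
\]

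For the base case $n=1$, I would use that $\bar x$ is a local minimizer: there is a neighborhood of $\bar x$ on which $f\ge f(\bar x)$, and since $\bar x+t u^\pr\to\bar x$ as $t\downarrow 0$ and $u^\pr\to u$, the point $\bar x+t u^\pr$ eventually lies in that neighborhood. Hence $t^{-1}[f(\bar x+t u^\pr)-f(\bar x)]\ge 0$ for all sufficiently small $t$ and all $u^\pr$ near $u$, and passing to the $\liminf$ gives $\ld 1 f(\bar x;u)\ge 0$ for every $u\in\E$. Because the zero functional satisfies $0(u)=0\le\ld 1 f(\bar x;u)$, we obtain $0\in\lsubd 1 f(\bar x)$.

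For the inductive step, I would assume $0\in\lsubd i f(\bar x;0,\dots,0)$ for every $i<n$; this is exactly what permits the choice $x_i^*=0$ at each lower order and guarantees that the derivative of order $n$ at $\bar x$ with all fixed elements zero is well defined. With the simplified form of $\Delta_n$ displayed above, the same local-minimality argument shows $n!\,t^{-n}[f(\bar x+t u^\pr)-f(\bar x)]\ge 0$ for small $t$ and $u^\pr$ near $u$, so its $\liminf$, which is $\ld n f(\bar x;0,\dots,0;u)$, is nonnegative for every $u$. The zero operator in $L^n(\E)$ then satisfies $0\underbrace{(u)(u)\dots (u)}_{n-\text{times}}=0\le\ld n f(\bar x;0,\dots,0;u)$, whence $0\in\lsubd n f(\bar x;0,\dots,0)$, closing the induction.

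I do not expect a substantive obstacle, since the whole argument rests on the elementary principle that a nonnegative quantity has a nonnegative $\liminf$. The only point requiring attention is the inductive bookkeeping: one must confirm that setting all the previously chosen elements to zero is admissible, i.e.\ that each intermediate subdifferential is nonempty, and this is precisely what the induction supplies, so that the derivative of order $n$ is meaningful before the $\liminf$ is formed.
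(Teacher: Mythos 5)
Your proposal is correct and follows essentially the same route as the paper's own proof: induction on $n$, observing that with all fixed elements $x_i^*$ set to zero the expression $\Delta_n$ collapses to $n!\,t^{-n}[f(\bar x+tu^\pr)-f(\bar x)]$, which is nonnegative for small $t$ and $u^\pr$ near $u$ by local minimality, so the $\liminf$ is nonnegative and the zero operator belongs to each subdifferential. Your explicit attention to the inductive bookkeeping (nonemptiness of the lower-order subdifferentials making the order-$n$ derivative well defined) matches the paper's remark that the derivative ``has sense'' under the inductive hypothesis.
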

\begin{proof}
%It follows from the definition of the lower Hadamard directinal derivative that
%\[
%\ld 1 f (\bar x;u)=\liminf_{t\downarrow 0,u^\pr\to u,}\,t^{-1}[f(\bar x+t u^\pr)-f(\bar x)].
%\]
Since $\bar x$ is a local minimizer, then there exists a neighbourhood $N\ni\bar x$ with $f(x)\ge f(\bar x)$ for all $x\in N$. Let $u\in\E$ be an arbitrary chosen direction. Then $f(\bar x+t u^\pr)\ge f(\bar x)$ for all sufficiently small positive numbers $t$ and for all directions $u^\pr$, which are  sufficiently close to $u$. It follows from Definition \ref{def1} that $\ld 1 f (\bar x;u)\ge 0$. Therefore $0\in\lsubd 1 f(\bar x)$, because $u\in\E$ is an arbitrary direction.

By the definition of the second-order lower derivative,
using that $0\in\lsubd 1 f(\bar x)$ we obtain that $\ld 2 f(\bar x;u;0)$ is well defined and
\[
\ld 2 f (\bar x;0;u)=\liminf_{t\downarrow 0,u^\pr\to u,}\, 2\, t^{-2}
[f(\bar x+t u^\pr)-f(\bar x)]\ge 0
\]
for all directions $u\in\E$. Therefore $0\in\lsubd 2 f(\bar x;0)$.

Let $n$ be an arbitrary positive integer and 
%\begin{equation}\label{2}
%\[
%\ld i f(\bar x;u;\underbrace{0,0,\dots ,0)}_{(i-1)-times}\ge 0\quad\text{for all}\quad i=1,2,\dots,n-1,\;\forall u\in\E.
%\]
%\end{equation}
%It follows from the definition of the lower subdifferential of order $n-1$ that
\[
0\in\lsubd i f(\bar x;\underbrace{0,0,\dots ,0)}_{(i-2)-times},\quad i=1,2,\dots,n-1
\]
Hence $\ld n f(\bar x;\underbrace{0,0,\dots ,0}_{(n-1)-times};u)$ has sense and
\[
\ld n f(\bar x;\underbrace{0,0,\dots ,0}_{(n-1)-times};u)=\liminf_{t\downarrow 0,u^\pr\to u,}\, n!\, t^{-n}
[f(\bar x+t u^\pr)-f(\bar x)]\ge 0,\quad\forall u\in\E.
\]
It follows from the definition of the lower subdifferential of order $n$ that
$0\in\lsubd n f(\bar x;\underbrace{0,0,\dots ,0)}_{(n-1)-times}$. 
\end{proof}

\begin{remark}
Condition (\ref{14}) is equivalent to the following one:
\begin{equation}\label{15}
\ld n f(\bar x;\underbrace{0,0,\dots ,0}_{(n-1)-times};u)\ge 0,\quad\textrm{ for all }u\in\E,\;\textrm{ for all positive integers } n.%=1,2,3,\dots
\end{equation}
\end{remark}

\begin{corollary}
If $\bar x$ is a local minimizer, then for  every positive integer $n$ there exist $x_1^*$, $x_2^*$,\dots,$x_{n-1}^*$, which do not depend on $u$ such that
\begin{equation}\label{30}
x_1^*\in\lsubd 1 f(\bar x),\; x_i^*\in\lsubd i f(\bar x;x_1^*,x_2^*,\dots, x_{i-1}^*),\; i=2,3,\dots, n
\end{equation}
and
\begin{equation}\label{26}
\ld 1 f(\bar x;u)\ge 0,\quad \ld i f(\bar x;x^*_1,x^*_2,\dots,x^*_{i-1};u)\ge 0,\; i=2,3,\dots,n.
\end{equation}
\end{corollary}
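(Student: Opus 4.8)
The plan is to obtain the corollary as an immediate specialization of Theorem \ref{th1}, taking the zero functional at every stage of the chain; no new estimate is needed, and the content is purely a matter of matching notation to the statement of the theorem.

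First I would produce the witnesses explicitly by setting $x_i^*=0\in L^i(\E)$ for each $i=1,2,\dots,n$. Being identically zero, these functionals plainly do not depend on the direction $u$, which disposes of that clause of the statement. With this choice the argument chain $(x_1^*,\dots,x_{i-1}^*)$ is exactly $(0,\dots,0)$, so each derivative $\ld i f(\bar x;x_1^*,\dots,x_{i-1}^*;u)$ is literally the derivative $\ld i f(\bar x;0,\dots,0;u)$ that figures in Theorem \ref{th1}. Before going further I would note that the theorem supplies $0\in\lsubd i f(\bar x;0,\dots,0)$ at every order; in particular each of these subdifferentials is nonempty, so the inductive requirement of Definition \ref{def3} is met and the whole chain of derivatives is well defined.

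Next I would invoke Theorem \ref{th1} directly. It yields $0\in\lsubd 1 f(\bar x)$ together with $0\in\lsubd i f(\bar x;0,\dots,0)$ for $i=2,3,\dots,n$, and for our choice of subgradients these are precisely the inclusions (\ref{30}). To extract the inequalities (\ref{26}) I would read each membership through the defining relation of the subdifferentials: by Definition \ref{def4} (and its first-order analogue), the inclusion $0\in\lsubd i f(\bar x;0,\dots,0)$ says exactly that $0\le\ld i f(\bar x;0,\dots,0;u)$ for every $u\in\E$, which is (\ref{26}); alternatively one may simply quote the Remark identifying (\ref{14}) with (\ref{15}). The argument has no real obstacle---the only point deserving care is to confirm that the zero chain is an admissible sequence of subgradients at each stage, and this is furnished automatically by the inclusions $0\in\lsubd i f(\bar x;0,\dots,0)$ coming from the theorem.
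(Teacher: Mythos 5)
Your proposal is correct and coincides with the paper's own proof, which simply chooses $x_1^*=0,\,x_2^*=0,\dots,x_{n-1}^*=0$ and lets Theorem \ref{th1} do the rest. Your additional remarks---that the zero chain is admissible because Theorem \ref{th1} guarantees nonemptiness of each subdifferential, and that the inequalities (\ref{26}) follow by reading the inclusions through Definition \ref{def4}---merely make explicit what the paper leaves implicit.
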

\begin{proof}
We choose $x_1^*=0$, $x_2^*=0$,\dots,$x_{n-1}^*=0$.
\end{proof}
%The derivative of order $n$
%\[
%\underline{d}^n f(x;u)=\liminf_{t\downarrow 0,u^\pr\to u}\, t^{-n}\, [f(x+t u^\pr)-f(x)]
%\]
%was introduced by Studniarski \cite{stu86}. It is immediately seen that 
%\[
%\ld n f(x;u;\underbrace{0,0,\dots ,0)}_{(n-1)-times}=n!\,\underline{d}^n f(x;u).
%\]
%Therefore both derivatives coincide at the points of local minima, but they are different if the point is not a local minimizer. In Section \ref{s4}, we consider the case when the elements of the respective  subdifferentials are nonzero.

The following example shows that Condition (\ref{15}) is not sufficient for $\bar x$ to be a local minimizer:
\begin{example}\label{ex2}
Consider the function of one variable $f:\R\to\R$ defined by:
\[
f(x)=\left\{
\begin{array}{ll}
-\exp\,(-1/x^2), & \textrm{if}\quad x\ne 0, \\
0, & \textrm{if}\quad x=0.
\end{array}\right.
\]
Let us take $\bar x=0$. Then we have
\[
\ld n f(\bar x;\underbrace{0,0,\dots ,0}_{(n-1)-times};u)=0\quad\textrm{ for all }u\in\R,\;\textrm{ for all positive integers } n,
\]
\[
\lsubd n f(\bar x;\underbrace{0,0,\dots ,0)}_{(n-1)-times}=\{0\}\;\textrm{if }n\textrm{ is odd,}\quad
%\]
%\[
\lsubd n f(\bar x;\underbrace{0,0,\dots ,0)}_{(n-1)-times}=(-\infty,0]\;\textrm{if }n\textrm{ is even.}
\]
Hence Condition {\rm (\ref{15})} is satisfied, but $\bar x$ is not a local minimizer. Really, it is a global maximizer.
\end{example}

On the other hand the following sufficient conditions hold:
\begin{theorem}\label{th4}
Let be given a proper extended real function $\f$ and a point $\bar x\in\dom f$. Suppose that for every direction $u\in\E$, $u\ne 0$ we have $\ld 1 f(\bar x;u)>0$,
or there exists a positive integer $n=n(u)$, $n\ge 2$, which depend on $u$, and such that the following conditions hold:
\begin{equation}\label{4}
0\in\lsubd 1 f(\bar x),\quad 0\in\lsubd i f(\bar x;\underbrace{0,0,\dots ,0)}_{(i-1)-times}\quad\text{for all}\quad i=1,2,\dots, n-1
\end{equation}
and
\begin{equation}\label{5}
\ld {n(u)} f(\bar x;\underbrace{0,0,\dots ,0}_{(n-1)-times};u)>0.
\end{equation}
Then $\bar x$ is a strict local minimizer.
\end{theorem}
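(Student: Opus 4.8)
The plan is to argue by contradiction, converting a failure of strict minimality into a sequence of ``bad'' points, extracting a limiting direction by compactness of the unit sphere in the finite-dimensional space $\E$, and then contradicting the pointwise positivity hypothesis by evaluating the relevant $\liminf$ along this specific sequence. The essential structural fact I will exploit is that a Hadamard-type $\liminf$, taken jointly over $t\downarrow 0$ and $u^\pr\to u$, is bounded above by the $\liminf$ along any admissible sequence.

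First I would suppose $\bar x$ is not a strict local minimizer. Then for each $k$ there is a point $x_k$ with $0<\norm{x_k-\bar x}<1/k$ and $f(x_k)\le f(\bar x)$; in particular each $x_k\in\dom f$. Writing $t_k=\norm{x_k-\bar x}\downarrow 0$ and $u_k=(x_k-\bar x)/t_k$, I have $\norm{u_k}=1$ and $x_k=\bar x+t_k u_k$. Since the unit sphere of $\E$ is compact, after passing to a subsequence I may assume $u_k\to u$ for some unit vector $u$, so $u\ne 0$. The joint $\liminf$ then gives, for the first order,
\[
\ld 1 f(\bar x;u)\le\liminf_{k\to\infty}\,t_k^{-1}[f(\bar x+t_k u_k)-f(\bar x)],
\]
and the analogous inequality holds at every higher order, since $(t_k,u_k)$ is an admissible approach to $(0^+,u)$.

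Now I would apply the hypothesis to this fixed $u\ne 0$ and split into two cases. If $\ld 1 f(\bar x;u)>0$, the displayed inequality forces the right-hand $\liminf$ to be positive; but $f(\bar x+t_k u_k)=f(x_k)\le f(\bar x)$ gives $t_k^{-1}[f(x_k)-f(\bar x)]\le 0$ for every $k$, a contradiction. Otherwise there is an integer $n=n(u)\ge 2$ with $0$ lying in all the intermediate subdifferentials of orders $1,\dots,n-1$; this makes $\ld n f(\bar x;0,\dots,0;u)$ well defined and, since every correction term $\tfrac{t^i}{i!}\,x^*_i(u^\pr)\cdots(u^\pr)$ in $\Delta_n$ vanishes when each $x^*_i=0$, the $n$-th derivative reduces to $\liminf_{t\downarrow 0,u^\pr\to u} n!\,t^{-n}[f(\bar x+t u^\pr)-f(\bar x)]$. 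Testing this along $(t_k,u_k)$ and using $f(x_k)\le f(\bar x)$ again yields a nonpositive quantity, contradicting $\ld{n(u)} f(\bar x;0,\dots,0;u)>0$. In either case a contradiction results, so $\bar x$ is a strict local minimizer.

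The substantive step, and the reason the statement uses Hadamard rather than Dini derivatives, is the passage to the limiting direction: the bad points $x_k$ approach $\bar x$ along directions $u_k$ that need not equal $u$, so a derivative that fixed the direction exactly could not legitimately be tested along $(t_k,u_k)$. The joint $\liminf$ over $u^\pr\to u$ is precisely what absorbs the perturbation $u_k\to u$, and this is where I expect the only real care to be needed; the simplification of $\Delta_n$ under the vanishing of the lower-order subgradients is routine given Definition \ref{def3}.
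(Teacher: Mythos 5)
Your proof is correct, but it follows a genuinely different route from the paper's. The paper argues directly: for each unit direction $u$, the hypotheses (\ref{4})--(\ref{5}) (or $\ld 1 f(\bar x;u)>0$ in the first-order case) give $\alpha>0$, $\delta>0$, $\eps>0$ with $f(\bar x+tu^\pr)\ge f(\bar x)+\alpha t^{n}/n!>f(\bar x)$ whenever $t\in(0,\delta)$ and $\norm{u^\pr-u}<\eps$; it then covers the unit sphere by finitely many such direction-neighbourhoods (compactness used as finite subcover) and takes the minimum $\bar\delta$ of the corresponding $\delta_i$, obtaining $f(x)>f(\bar x)$ on a punctured ball of radius $\bar\delta$. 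You instead argue by contradiction: failure of strict minimality produces $x_k\to\bar x$ with $f(x_k)\le f(\bar x)$, sequential compactness of the sphere yields a limit direction $u$ of $u_k=(x_k-\bar x)/t_k$, and testing the joint Hadamard $\liminf$ along the admissible sequence $(t_k,u_k)$ contradicts the hypothesis at that single direction, in either of its two cases. Both proofs rest on compactness of the unit sphere in finite dimension, used in its two dual forms (open cover versus sequences). The paper's version is direct and produces an explicit uniform radius; yours is shorter, and it isolates cleanly why Hadamard rather than Dini convergence is indispensable, since the bad points approach $\bar x$ along directions $u_k\ne u$. It is worth noting that your argument is essentially the same technique the paper itself employs for the sufficiency halves of Theorems \ref{th2} and \ref{th5}, so the two approaches coexist in the paper, just attached to different statements.
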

\begin{proof}
Let $u\ne 0$ be an arbitrary direction. It follows from (\ref{4}) and (\ref{5}) that there exists $\alpha>0$ with
\[
\liminf_{t\downarrow 0,u^\pr\to u}\,n!\,t^{-n}[f(\bar x+t u^\pr)-f(\bar x)]>2\alpha>0.
\]
Therefore, there exist $\delta>0$ and $\eps>0$ such that
\begin{equation}\label{29}
f(\bar x+t u^\pr)\ge f(\bar x)+\alpha\, t^n/n!>f(\bar x)
\end{equation}
for every $t\in (0,\delta)$ and arbitrary $u^\pr$ with $\norm{u^\pr-u}<\eps$.

Without loss of generality we may suppose that $u$ belongs to the unit sphere $S:=\{u\in\E\mid\norm{u}=1\}$. Since $u$ is arbitrary chosen,  then we can cover $S$ by  neighbourhoods $N(u;\eps):=\{u^\pr\in S\mid\norm{u^\pr-u}<\eps\}$ such that (\ref{29}) is satisfied. Taking into account that the unit sphere is compact, then we can choose a finite number of neighbourhoods $N(u_1;\eps_1)$, $N(u_2,\eps_2)$,\dots $N(u_s;\eps_s)$ that cover $S$. Let the respective values of $\delta$ are $\delta_1$, $\delta_2$,\dots $\delta_s$ and $\bar \delta=\min\{\delta_i\mid 1\le i\le s\}$. Then we have
\[
f(\bar x+t u^\pr)>f(\bar x),\quad\forall u^\pr\in S,\;\forall t\in(0,\bar\delta). 
\]
Hence, $f(x)>f(\bar x)$ for all $x\in\E$ such that $\norm{x-\bar x}<\bar\delta$, which implies that $\bar x$ is a strict local minimizer.
\end{proof}

\section{Isolated minimizers and optimality conditions}
\label{s4}
\setcounter{equation}{0}
The following definition was introduced by Studniarski \cite{stu86} as a generalization of the respective notion of order $1$ and $2$ in \cite{aus84}.

\begin{definition}
Let $n$ be a positive integer.
A point $\bar x\in\dom f$ is called an isolated local minimizer of
order $n$ for the function $\f$ iff there exist a neighbourhood $N$
of $\bar x$ and a constant $C>0$ with
\begin{equation}\label{1}
f(x)\ge f(\bar x)+C\norm{x-\bar x}^n,\quad\forall  x\in N.
\end{equation}
\end{definition}

\begin{theorem}\label{th2}
Let be given a proper extended real function $\f$ and $\bar x\in\dom f$. Then $\bar x$ is an isolated local minimizer of order $n$, where $n$ is a positive integer such that $n\ge 2$, if and only if (\ref{4}) is satisfied
%\begin{equation}\label{34}
%0\in\lsubd 1 f(\bar x),\quad 0\in\lsubd i f(\bar x;\underbrace{0,0,\dots ,0)}_{(i-1)-times}\quad\text{for all}\quad i=1,2,\dots, n-1
%\end{equation}
and
\begin{equation}\label{35}
\ld {n} f(\bar x;\underbrace{0,0,\dots ,0}_{(n-1)-times};u)>0,\quad\forall\; u\in\E\setminus\{0\}.
\end{equation}
%\begin{equation}\label{4}
%\[
%0\in\lsubd i f(\bar x;\underbrace{0,0,\dots ,0)}_{(i-1)-times}\quad\text{for all}\quad i=1,2,\dots, n-1
%\end{equation}
%and
%\begin{equation}\label{5}
%\ld n f(\bar x;u;\underbrace{0,0,\dots ,0)}_{(n-1)-times}>0,\quad\forall u\in\E\setminus\{0\}.
%\end{equation}
\end{theorem}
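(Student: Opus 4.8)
The plan is to establish the two implications separately. The necessity of (\ref{4}) and (\ref{35}) will follow quickly from Theorem~\ref{th1} together with the defining inequality of an isolated minimizer, while the sufficiency will rest on a compactness argument over the unit sphere, closely patterned on the proof of Theorem~\ref{th4}.

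For necessity, assume $\bar x$ is an isolated local minimizer of order $n$. In particular $\bar x$ is a local minimizer, so Theorem~\ref{th1} yields at once $0\in\lsubd 1 f(\bar x)$ and $0\in\lsubd i f(\bar x;0,\dots,0)$ for every $i$, which is precisely (\ref{4}). Because all the subgradients are zero, the sum in $\Delta_n$ disappears and the $n$-th order derivative reduces to $\ld n f(\bar x;0,\dots,0;u)=\liminf_{t\downarrow 0,u^\pr\to u}n!\,t^{-n}[f(\bar x+tu^\pr)-f(\bar x)]$. Substituting $x=\bar x+tu^\pr$ into (\ref{1}) gives $f(\bar x+tu^\pr)-f(\bar x)\ge C\,t^n\norm{u^\pr}^n$ for $tu^\pr$ small, hence $n!\,t^{-n}[f(\bar x+tu^\pr)-f(\bar x)]\ge n!\,C\norm{u^\pr}^n$; passing to the $\liminf$ as $t\downarrow 0$ and $u^\pr\to u$ produces $\ld n f(\bar x;0,\dots,0;u)\ge n!\,C\norm{u}^n>0$ for every $u\ne 0$, which is (\ref{35}).

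For sufficiency, I would follow the proof of Theorem~\ref{th4} almost verbatim, the one essential difference being that here the order $n$ is the same for every direction, and this is exactly what lets the uniform power $\norm{x-\bar x}^n$ survive the compactness step. Fix $u$ in the unit sphere $S$. From (\ref{4}) the lower-order subgradients vanish, so (\ref{35}) says $\liminf_{t\downarrow 0,u^\pr\to u}n!\,t^{-n}[f(\bar x+tu^\pr)-f(\bar x)]>0$, and therefore there are $\alpha_u>0$ and radii $\delta_u,\eps_u>0$ with $f(\bar x+tu^\pr)\ge f(\bar x)+\alpha_u t^n/n!$ for all $t\in(0,\delta_u)$ and all $u^\pr$ satisfying $\norm{u^\pr-u}<\eps_u$, exactly as in (\ref{29}). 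Covering the compact sphere $S$ by finitely many such neighbourhoods $N(u_1;\eps_1),\dots,N(u_s;\eps_s)$, and setting $\bar\delta=\min_j\delta_{u_j}$ and $\bar\alpha=\min_j\alpha_{u_j}>0$, I obtain $f(\bar x+tu^\pr)\ge f(\bar x)+\bar\alpha\,t^n/n!$ uniformly for every $u^\pr\in S$ and every $t\in(0,\bar\delta)$.

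The last step is to pass from directions on $S$ to arbitrary nearby points. Given $x$ with $0<\norm{x-\bar x}<\bar\delta$, write $t=\norm{x-\bar x}$ and $u^\pr=(x-\bar x)/t\in S$; the uniform estimate then becomes $f(x)\ge f(\bar x)+(\bar\alpha/n!)\norm{x-\bar x}^n$, an inequality that also holds trivially at $x=\bar x$. Thus (\ref{1}) is verified with $C=\bar\alpha/n!>0$ on the ball of radius $\bar\delta$, so $\bar x$ is an isolated local minimizer of order $n$. The main delicate point, as in Theorem~\ref{th4}, is that the Hadamard-type $\liminf$ must supply for each fixed $u$ a temporal radius $\delta_u$ and a spatial radius $\eps_u$ simultaneously; this is built into the joint convergence $t\downarrow 0$, $u^\pr\to u$ and is what makes the subsequent finite subcover argument legitimate.
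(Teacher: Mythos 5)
Your proof is correct, but the sufficiency half takes a genuinely different route from the paper. For necessity you argue exactly as the paper does (your shortcut of citing Theorem~\ref{th1} for (\ref{4}) is legitimate, since an isolated minimizer of order $n$ is in particular a local minimizer; the paper instead re-derives these inclusions directly from inequality (\ref{3})), and the passage from (\ref{1}) to $\ld n f(\bar x;0,\dots,0;u)\ge n!\,C\norm{u}^n>0$ is the same computation. For sufficiency, however, the paper argues by contradiction with sequences: it assumes $\bar x$ is not an isolated minimizer of order $n$, extracts points $x_k=\bar x+t_kd_k$ with $f(x_k)<f(\bar x)+\eps_k t_k^n$, passes to a subsequence $d_k\to d$ with $\norm{d}=1$, and shows $\ld n f(\bar x;0,\dots,0;d)\le\liminf_k n!\,\eps_k=0$, contradicting (\ref{35}). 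You instead give a direct argument: from (\ref{35}) you extract, for each unit $u$, constants $\alpha_u,\delta_u,\eps_u>0$ with $f(\bar x+tu^\pr)\ge f(\bar x)+\alpha_u t^n/n!$ on a Hadamard neighbourhood, then use compactness of the unit sphere to get a finite subcover and uniform constants $\bar\alpha,\bar\delta$, exactly as in the paper's proof of Theorem~\ref{th4} but exploiting the fact that the order $n$ is now direction-independent. Both arguments are sound and both ultimately rest on compactness (yours via finite subcovers, the paper's via sequential compactness of the sphere). Your version has the merit of being constructive --- it exhibits the explicit constant $C=\bar\alpha/n!$ and radius $\bar\delta$ in (\ref{1}) and unifies Theorems~\ref{th4} and~\ref{th2} under one covering scheme --- whereas the paper's sequential argument is shorter to write because the $\liminf$ definition feeds directly into a sequence-based contradiction and requires no uniformization over directions.
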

\begin{proof}
Let $\bar x$ be an isolated local minimizer of order $n$. We prove that Conditions (\ref{4}) and (\ref{35}) hold.
Suppose that $u\in\E$ is arbitrary chosen. It follows from Inequality (\ref{1}) that there exist numbers $\delta>0$, $\varepsilon>0$ and $C>0$ with
\begin{equation}\label{3}
f(\bar x+tu^\pr)\ge f(\bar x)+Ct^n\norm{u^\pr}^n
\end{equation}
for all  $t\in (0,\delta)$ and every $u^\pr$ such that $\norm{u^\pr-u}<\varepsilon$. Therefore %the lower Hadamard  derivative
\begin{equation}\label{2}
\ld 1 f (\bar x;u)=\liminf_{t\downarrow 0,u^\pr\to u,}\,t^{-1}[f(\bar x+t u^\pr)-f(\bar x)]\ge\liminf_{t\downarrow 0,u^\pr\to u,}\, Ct^{n-1}\norm{u^\pr}^{n}=0
\end{equation}
if $m>1$, and $0\in\ld 1 f(\bar x)$ if $m=1$.
%It follows from here that (\ref{4}) and (\ref{5}) are satisfied when $n=1$. 
%Let $n>1$. According to Inequality (\ref{2}) we have  $\ld 1 f(\bar x,u)\ge 0$ for any $u\in\E$. 
Therefore $0\in\lsubd 1 f(\bar x)$. 

Suppose that $m$ is an arbitrary positive integer such that $1\le m<n$ and we have
\[
0\in\lsubd i f(\bar x;\underbrace{0,0,\dots ,0)}_{(i-1)-times}\quad\text{for all}\quad i<m.
\]
It follows from (\ref{3}) that
\[
\ld m f(\bar x;\underbrace{0,0,\dots ,0}_{(m-1)-times};u)=\liminf_{t\downarrow 0,u^\pr\to u,}\,m!\, t^{-m}[f(\bar x+t u^\pr)-f(\bar x)]
\ge\liminf_{t\downarrow 0,u^\pr\to u,}\, Ct^{n-m}\norm{u^\pr}^{n}\ge 0,\;\forall u\in\E.
\]
and $0\in\lsubd m f(\bar x;\underbrace{0,0,\dots ,0)}_{(m-1)-times}$. Then it follows from (\ref{3}) that
\[
\ld n f(\bar x;\underbrace{0,0,\dots ,0}_{(n-1)-times};u)\ge\liminf_{t\downarrow 0,u^\pr\to u,}\, C\norm{u^\pr}^{n} >0,\quad\forall u\in\E\setminus\{0\}.
\] 

Conversely, suppose that Conditions {\rm (\ref{4})} and {\rm (\ref{35})}  hold. We prove that $\bar x$ is an isolated local minimizer of order $n$.
%% and for every $u\in\E$ with $\ld f (\bar x;u)=0$ inequality {\rm (\ref{4})} issatisfied. 
%Then $\bar x$ is an isolated local minimizer of order $n$.
Assume the contrary that $\bar x$ is not an isolated minimizer of order $n$.
Therefore, for every sequence $\{\varepsilon_k\}_{k=1}^\infty$ of positive numbers
converging to zero, there exists a sequence $\{x_k\}$ with $x_k\in\dom f$ such that
\begin{equation}\label{6}
\norm{x_k-\bar x}\le \varepsilon_k,\quad
f(x_k)< f(\bar x)+\varepsilon_k\norm{x_k-\bar x}^n,
\end{equation}

It follows from (\ref{6}) that $x_k\to\bar x$. Denote $t_k=\norm{x_k-\bar x}$,
$d_k=(x_k-\bar x)/t_k$. Passing to a subsequence, we may suppose
that $d_k\to  d$ where $\norm{d}=1$. It follows from here that
\[
\ld 1 f (\bar x;d)\le\liminf_{k\to\infty}\, t^{-1}_k[f(\bar x+t_k d_k)-f(\bar x)]
=\liminf_{k\to\infty}\, t^{-1}_k[f(x_k)-f(\bar x)]
\le\liminf_{k\to\infty}\,\varepsilon_k t_k^{n-1}=0.
\]
It follows from $0\in\subd f(\bar x)$ that $\ld 1 f (\bar x;d)=0$. 

Let $m$ be any integer with $1<m\le n$ such that $\ld i f(\bar x;\underbrace{0,0,\dots ,0}_{(i-1)-times};d)=0$ for $i<m$. Therefore
\begin{equation}\label{16}
%\begin{array}{l}
\ld {m} f(\bar x;\underbrace{0,0,\dots ,0}_{(m-1)-times};d)=\liminf_{t\downarrow 0,d^\pr\to d,}\,m!\,t^{-m}[f(\bar x+t d^\pr)-f(\bar x)] 
\le\liminf_{k\to\infty}\, m!\,\varepsilon_k\, t_k^{n-m}=0,
%\end{array}
\end{equation}
because $n-m\ge 0$ and $\varepsilon_k\to 0$. Then it follows from (\ref{4}) that
\[
\ld m f(\bar x;\underbrace{0,0,\dots ,0}_{(m-1)-times};d)=0\quad {\rm if}\quad m<n.
\]
 We conclude from the case $m=n$ that Inequality (\ref{16}) contradicts Condition (\ref{35}).
\end{proof}

\begin{example}
Consider the function $f:\R^2\to\R$ defined by
\[
f(x_1,x_2)=\left\{
\begin{array}{ll}
\exp\,(-1/(x_1^2+x_2^2)), & \textrm{if}\quad (x_1,x_2)\ne (0,0), \\
0, & \textrm{if}\quad (x_1,x_2)=(0,0).
\end{array}\right.
\]
The point $\bar x=(0,0)$ is a strict global minimizer, but there is no a positive integer $n$ such that $\bar x$ is an isolated minimizer of order $n$. 
\end{example}

\section{Global optimality conditions with a higher-order invex function}
\label{s5}
\setcounter{equation}{0}
Example \ref{ex2} shows that the necessary conditions for a local minimum are not sufficient for a global one.
Then the following question arises: Which is the largest class of functions such that the necessary optimality conditions  from Theorem \ref{th1} become sufficient for a global minimum. 
Recently, Ivanov \cite{optimization} introduced a new class of Fr\'echet differentiable functions called second-order invex ones in terms of the usual second-order directional derivative. They extend the so called invex ones and obey the following property: A Fr\'echet differentiable function is second-order invex if and only if each second-order stationary point is a global minimizer. We extend the notions invexity and second-order invexity to nondifferentiable functions in terms of the lower Hadamard directional derivatives of order $n$. Some more developments to inequality constrained problems in terms of the usual second-order directional derivative are recently obtained by Ivanov \cite{jogo2011}.

First, we recall the definition of an invex function \cite{han81} in terms of the lower Hadamard directional derivative.

\begin{definition}
 A proper extended real function $\f$ is called invex in terms of the lower Hadamard directional derivative iff there exists a map
$\eta_1: \E\times \E\to\E$  such that the following inequality holds for all $x\in\E $, $y\in\E$: 
\begin{equation}\label{13}
f(y)-f(x)\ge\ld 1 f(x;\eta_1(x,y)). 
\end{equation}
\end{definition}

We introduce the following two definitions:
\begin{definition}
%Let $S\subseteq\R^n$ be a given set. %Suppose that $\f$ is a subdifferentiable function that is $\subd f(x)\ne\emptyset$ for all $x\in S$. 
We call a proper extended function $\f$  invex of order $n$ %(for short, $n$-invex) 
in terms of the lower Hadamard derivatives  iff for every $\bar x\in\dom f$, $x\in\E$ such that there exist at least one $(i-1)$-ple $(x_1^*,x_2^*,\dots,x_{i-1}^*)$ with
\begin{equation}\label{7}
x_1^*\in\lsubd 1 f(\bar x),\; x_i^*\in\lsubd i f(\bar x;x_1^*,x_2^*,\dots, x_{i-1}^*),\; i=2,3,\dots, n
\end{equation}
there are $\eta_1$, $\eta_2,\dots$, $\eta_n$, which depend on $\bar x$ and $x$ such that the  following inequality holds 
\begin{equation}\label{8}
f(x)-f(\bar x)\ge\ld 1 f(\bar x;\eta_1(\bar x,x))+\sum_{i=2}^n\ld i f(\bar x; x^*_1,x_2^*,\dots,x_{i-1}^* ;\eta_i(\bar x,x))
\end{equation}
for all $x_i^*$, $i=1,2,\dots,n$ satisfying Conditions (\ref{7}).

If there exist $\eta_1(\bar x,x)$, $\eta_2(\bar x,x)$, $\eta_3(\bar x,x),\dots$ such that (\ref{7}) and (\ref{8}) are satisfied with $n=+\infty$, then we call $f$ invex in generalized sense (or invex of order $+\infty$).
%If condition (\ref{9}) is satisfied at some fixed point $x\in S$ with $\subd f(x)\ne\emptyset$ for all $y\in S$, then we call $f$ second-order invex at $x$.
\end{definition}

\begin{definition}
Let $\f$ be a given proper extended real function and $n$ be a positive integer. We call a stationary point of order $n$ %(for short, $n$-stationary point)
every point $\bar x\in\dom f$ which satisfies the necessary optimality conditions (\ref{30}) and (\ref{26}).
%\[
%0\in\lsubd 1 f(\bar x),\; 0\in\lsubd i f(\bar x;\underbrace{0,0,\dots ,0)}_{(i-1)-times}\quad\text{for all}\quad i=2,3,\dots, n.
%\]

The notion of a 1-stationary point coincides with the notion of a stationary point.
If (\ref{30}) and (\ref{26}) are satisfied for every $n=1,2,3,\dots$, then we call $\bar x$ stationary point in generalized sense (or  order $+\infty)$.
\end{definition}

\begin{theorem}\label{npi}
Let $n$ be a positive integer or $+\infty$ and  $\f$ a proper extended function.  Then $f$ is invex of order $n$ if and only if each stationary point $\bar x\in\dom f$ of order $n$ $(n<+\infty$ or $n=+\infty)$ is a global minimizer of $f$.
\end{theorem}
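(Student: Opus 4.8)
Since the statement is an equivalence, the plan is to prove the two implications separately; the forward one is routine and the converse carries the real content. For the implication that invexity forces every stationary point of order $n$ to be a global minimizer, let $\bar x\in\dom f$ be stationary of order $n$ and let $x_1^*,\dots,x_{n-1}^*$ be the subgradients furnished by (\ref{30}), so that (\ref{26}) holds. Fix an arbitrary $x\in\E$. This tuple satisfies (\ref{7}), so the pair $(\bar x,x)$ is covered by the definition of invexity, which supplies $\eta_1,\dots,\eta_n$ making (\ref{8}) hold for every admissible tuple, in particular for $x_1^*,\dots,x_{n-1}^*$. Substituting this tuple into (\ref{8}) and invoking (\ref{26}), which gives $\ld 1 f(\bar x;\eta_1)\ge 0$ and $\ld i f(\bar x;x_1^*,\dots,x_{i-1}^*;\eta_i)\ge 0$ for each $i$, every summand on the right-hand side is nonnegative, whence $f(x)\ge f(\bar x)$. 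As $x$ is arbitrary, $\bar x$ is a global minimizer; the case $n=+\infty$ is identical with the finite sum replaced by the series.

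For the converse I would assume that every stationary point of order $n$ is a global minimizer and build the maps $\eta_i$. The computational backbone is that $\ld 1 f(\bar x;0)=0$ and $\ld i f(\bar x;x_1^*,\dots,x_{i-1}^*;0)=0$ for every admissible tuple: the test sequence $u^\pr\equiv 0$ makes each difference quotient vanish, giving the bound $\le 0$, while the subgradient inequality evaluated at the direction $0$ gives $\ge 0$. Hence the choice $\eta_1=\dots=\eta_n=0$ makes the whole right-hand side of (\ref{8}) equal to $0$ for every admissible tuple, so (\ref{8}) holds automatically whenever $f(x)\ge f(\bar x)$. The only pairs that require genuine work are those with $f(x)<f(\bar x)$.

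For such a pair $\bar x$ is not a global minimizer, so by hypothesis it is not stationary of order $n$, and no admissible tuple can satisfy (\ref{26}). I would then exploit the positive homogeneity $\ld m f(\bar x;x_1^*,\dots;\lambda u)=\lambda^m\,\ld m f(\bar x;x_1^*,\dots;u)$ for $\lambda>0$ to drive one term of (\ref{8}) to $-\infty$ while keeping the others at $0$. If $0\notin\lsubd 1 f(\bar x)$ one picks $\hat u$ with $\ld 1 f(\bar x;\hat u)<0$ and sets $\eta_1=\lambda\hat u$, $\eta_i=0$ $(i\ge 2)$; the right-hand side then equals $\lambda\,\ld 1 f(\bar x;\hat u)$, which is independent of the tuple and tends to $-\infty$. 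If $0\in\lsubd 1 f(\bar x)$, one passes to the least order $m\le n$ at which the zero tuple breaks down, obtaining a direction $u_0$ with $\ld m f(\bar x;0,\dots,0;u_0)<0$, and sets $\eta_m=\lambda u_0$ with the remaining $\eta_i=0$. The hard part will be this last step: since (\ref{8}) must hold for all admissible tuples at once while the $\eta_i$ are fixed, I must ensure that the single direction $u_0$ sends $\lambda^m\,\ld m f(\bar x;x_1^*,\dots,x_{m-1}^*;u_0)$ to $-\infty$ for every admissible $x_1^*,\dots,x_{m-1}^*$, not just for the zero tuple. I expect to settle this by comparing an arbitrary admissible tuple with the zero tuple along a sequence $t_k\downarrow 0$, $u^\pr_k\to u_0$ realizing the zero-tuple descent: minimality of $m$ forces $\ld j f(\bar x;0,\dots,0;u_0)=0$ for $j<m$, and the chain subgradient inequalities then keep the correction terms $-\sum_{j=1}^{m-1}\frac{m!}{j!}t_k^{\,j-m}x_j^*(u^\pr_k)\dots(u^\pr_k)$ from destroying negativity, so the order-$m$ term stays bounded above by a negative multiple of $\lambda^m$. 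This homogeneity-and-uniformity argument is the technical heart; the two identities above and the reduction to the case $f(x)<f(\bar x)$ are routine.
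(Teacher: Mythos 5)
Your forward implication, the identities $\ld 1 f(\bar x;0)=0$ and $\ld i f(\bar x;x_1^*,\dots,x_{i-1}^*;0)=0$ for admissible tuples, the resulting reduction to pairs with $f(x)<f(\bar x)$, and your Case A are all correct, and in substance they coincide with the paper's ``First'' and ``Second'' steps (the paper argues by contradiction, you construct the $\eta_i$ directly; that difference is cosmetic). The proof breaks exactly at the step you flagged as the technical heart, and the ingredients you cite there provably cannot deliver it. The chain subgradient inequalities bound the quantities $x_j^*(u_0)\dots(u_0)$ from \emph{above} (by $\ld j f$, which vanishes at $u_0$ by minimality of $m$); they allow these quantities to be strictly negative, and in that case the correction terms $-\sum_{j=1}^{m-1}\frac{m!}{j!}\,t^{j-m}x_j^*(u^\pr)\dots(u^\pr)$ do not stay controlled but blow up to $+\infty$. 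Concretely, take $f(x)=x^3$ on $\R$ and $\bar x=0$, so that $m=3$, $u_0=-1$, $\ld 3 f(0;0,0;u_0)=-6<0$. The tuple $(x_1^*,x_2^*,x_3^*)=(0,-1,0)$ satisfies (\ref{7}) (because $\ld 2 f(0;0;u)=0\ge -u^2$), and it satisfies every ingredient you invoke: $\ld 1 f(0;u_0)=0$, $\ld 2 f(0;0;u_0)=0$, $x_2^*(u_0)(u_0)=-1\le 0$. Yet $\ld 3 f(0;0,-1;u)=\liminf\,[\,6(u^\pr)^3+3(u^\pr)^2/t\,]=+\infty$ for every $u\ne 0$, so with your fixed $\eta_3=\lambda u_0$, $\eta_1=\eta_2=0$ the right-hand side of (\ref{8}) equals $+\infty$ for this tuple and (\ref{8}) fails for every $\lambda$. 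The same mechanism acts at first order: whenever $\ld 1 f(\bar x;-u_0)>0$, the set $\lsubd 1 f(\bar x)$ may contain $x_1^*$ with $x_1^*(u_0)<0$, and any admissible tuple beginning with such an $x_1^*$ has $\ld m f(\bar x;x_1^*,\dots,x_{m-1}^*;u_0)=+\infty$.

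What Case B actually requires is a direction that is bad \emph{uniformly} over all admissible tuples, i.e.\ $\sup\,\ld m f(\bar x;x_1^*,\dots,x_{m-1}^*;u_0)<0$, the supremum taken over all tuples satisfying (\ref{7}); non-stationarity only provides, for each tuple, a bad direction depending on that tuple. Converting ``for every tuple there is a direction of descent'' into ``there is one direction of descent for every tuple'' is the genuine content of the converse, and your sketch contains no argument for it: you use non-stationarity only to choose $m$ and $u_0$ from the zero tuple, and the $x^3$ configuration above shows that this information, together with the subgradient inequalities, is compatible with the conclusion being false (there the offending tuple $(0,-1,0)$ is in fact a stationarity witness, so any correct proof must exploit non-stationarity to exclude such tuples --- a step that is entirely missing). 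You also cannot close the hole by consulting the paper: its own ``Third'' step negates ``for all $u$ there exists a tuple with $\ld i f(\bar x;\dots;u)\ge 0$'' into ``there exists $v$ with $\ld i f(\bar x;\dots;v)<0$ for all tuples,'' which is not the negation but precisely the quantifier interchange at issue, and its final contradiction additionally needs the values $\ld i f(\bar x;\cdot;v)$ to be bounded away from $0$ uniformly in the tuple, since the tuple produced by (\ref{10}) varies with $t$. So your Case B has a genuine gap, shared with the published proof; it disappears only if one strengthens the definition of invexity so that the $\eta_i$ may depend on the chosen subgradients $x_1^*,\dots,x_{n-1}^*$, in which case both your construction and the paper's argument go through.
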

\begin{proof}
We prove the case $n<+\infty$. The other case is similar.
Suppose that $f$ is invex of order $n$. If the function has no stationary points, then obviously every stationary point is a global minimizer. Suppose that the function has at least one stationary point, that is a point satisfying the necessary optimality conditions (\ref{30}) and (\ref{26}).
%We prove that it  is a global minimizer.
Suppose that $\bar x\in\dom f $ is a given stationary point of order $n$. We prove that it is a global minimizer of $f$.
Suppose that  $x$ is an arbitrary point from $\E$. It follows from invexity of order $n$ that there exist $\eta_i(\bar x,x)$, $i=1,2,\dots, n$ such that 
%Conditions (\ref{7}) and (\ref{8}) are satisfied
%\begin{equation}\label{8}
\begin{equation}\label{9}
f(x)-f(\bar x)\ge\ld 1 f(\bar x;\eta(\bar x,x))+\sum_{i=2}^n\ld i f(\bar x; x^*_1,x_2^*,\dots,x_{i-1}^*;\eta_i(\bar x,x) )
\end{equation}
for all $x_i^*$, $i=1,2,\dots,n-1$
Since $\bar x$ is a stationary point of order $n$, then there exist $x_1^*$, $x_2^*$,\dots, $x_{n-1}^*$ with
\[
\ld 1 f(\bar x;u)\ge 0,\;\ld i f(\bar x; x_1^*,x_2^*,\dots ,x_{i-1}^*;u)\ge 0\quad\text{for all}\quad i=2,3,\dots,n,\;\forall u\in\E.
\]
Hence
\[
\ld 1 f(\bar x;\eta_1(\bar x,x))\ge 0,\;\ld i f(\bar x; x_1^*,x_2^*,\dots ,x_{i-1}^*;\eta_i(\bar x,x) )\ge 0\quad\text{for all}\quad i=2,3,\dots,n.
\]
It follows from (\ref{9}) that $f(x)\ge f(\bar x)$. Therefore $\bar x$ is a global minimizer.

Conversely, suppose that every stationary point of order $n$ is a global minimizer. We prove that $f$ is invex of order $n$. 
Assume the contrary.  Hence, there exists a pair $(\bar x,x)\in\dom f\times \E$ such that for every $\eta_i\in\E$, $i=1,2,\dots,n$ there are $x_1^*$, $x_2^*$ , $\dots$, $x_{n-1}^*$ satisfying Conditions (\ref{7}) and
%for every pair $(u,v)\in\R^n\times\R^n$ there is $w\in\subd f(x)$ with
\begin{equation}\label{10}
f(x)-f(\bar x)<\ld 1 f(\bar x;\eta_1)+\sum_{i=2}^n\ld i f(\bar x; x^*_1,x_2^*,\dots,x_{i-1}^*;\eta_i ).
\end{equation}
%is satisfied for all $u\in\R^n$, $v\in\R^n$, and $\xi\in\subd f(x)$. Consider the following cases:

First, we prove that $f(x)<f(\bar x)$. Let us choose in (\ref{10}) $\eta_i=0$, $i=1,2,\dots,n$. We have 
\[
\ld 1 f(\bar x;0)\le\liminf_{t\downarrow 0}\,t^{-1}(f(\bar x+t.0)-f(\bar x))=0.
\]
Let $i$ be an arbitrary integer such that $1<i\le n$. Then
\[
\ld i f(\bar x;x^*_1,x^*_2,\dots,x^*_{i-1};0)\le \liminf_{t\downarrow 0,u^\pr\to 0}\,\frac{i!}{t^i}\,[f(\bar x+t.u^\pr)-f(\bar x)-\sum_{k=1}^{i-1}\frac{t^k}{k!}x^*_k\underbrace{(u^\pr)(u^\pr)\dots (u^\pr)}_{k-\text{times}}]
\]
\[
\le\liminf_{t\downarrow 0}\,\frac{i!}{t^i}\,[f(\bar x+t.0)-f(\bar x)- \sum_{k=1}^{i-1}\frac{t^k}{k!}x^*_k\underbrace{(0)(0)\dots (0)}_{k-\text{times}}]=0,\quad i=2,3,\dots, n.
\] 
 It follows from (\ref{10}) that $f(x)<f(\bar x)$.

Second, we prove that  
\begin{equation}\label{11}
\ld 1 f(\bar x;u)\ge 0,\quad\forall u\in\E.
\end{equation} 
Suppose the contrary that there exists at least one point $v\in\E$ with $\ld 1 f(x;v)<0$. The lower Hadamard directional derivative is positively homogeneous with respect to the direction, that is 
\[
\ld 1 f(\bar x;\tau u)=\tau\ld 1 f(\bar x;u),\quad\forall \bar x\in\dom f,\;\forall u\in\E,\;\forall \tau\in(0,+\infty).
\]
Then inequality (\ref{10}) is satisfied when $\eta_1=tv$, $t>0$, $\eta_i=0$, $i\ne 1$, that is 
\[
f(x)-f(\bar x)<t\ld 1 f(\bar x;v),\quad\forall t>0,
\]
which is impossible, because $f(x)-f(\bar x)$ is finite and $\ld 1 f(x;v)<0$.  %with $t$ being sufficiently large positive number, $\eta_i=0$, $i=2,3,\dots, n$.
Therefore, $\ld 1 f(\bar x;u)\ge 0,\quad\forall u\in\E$.

Third, we prove that for all $u\in\E$ there are $x_1^*$, $x_2^*$,\dots,$x_{n-1}^*$ with
\begin{equation}\label{12}
\ld i f(\bar x;x^*_1,x^*_2,\dots,x^*_{i-1};u)\ge 0\quad\textrm{ for } i=2,3,\dots, n.
\end{equation}
Suppose the contrary that there exists $v\in\E$ with $\ld i f(\bar x;x^*_1,x^*_2,\dots,x^*_{i-1};v)<0$ for all  $x^*_1$, $x^*_2$,...,$x^*_{i-1}$ satisfying Conditions (\ref{7}). The lower Hadamard directional derivative of order $i$ is positively homogeneous of degree $i$ with respect to the direction, that is
\[
\ld i f(\bar x;x_1^*,x_2^*,\dots,x_{i-1}^*;tv)=t^i\ld i f(\bar x;x_1^*,x_2^*,\dots,x_{i-1}^*;v),\quad\forall t>0.
\]
Then it follows from (\ref{10}) with $\eta_i=tv$, $t>0$, $\eta_k=0$ when $k\ne i$ that
\[
f(x)-f(\bar x)<t^i\ld i f(\bar x;x_1^*,x_2^*,\dots,x_{i-1}^*;v ),\quad\forall t>0,
\]
which is impossible when $t$ is sufficiently large positive number.

The following is the last part of the proof.   It follows from (\ref{11}) and (\ref{12}) that $\bar x$ is a stationary point of order $n$. According to the hypothesis $\bar x$ is a global minimizer, which contradicts the inequality $f(x)<f(\bar x)$. 
\end{proof}

In the next claim we show that the class of invex functions of order $(n+1)$ contains all invex functions of order $n$ in terms of the lower Hadamard directional derivative.

\begin{proposition}
Let $\f$ be an invex function of order $n$. Then $f$ is invex of order $(n+1)$. Every invex function of order $n$ is invex of order $+\infty$.
\end{proposition}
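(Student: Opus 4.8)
The cleanest route is to deduce both assertions from the characterization established in Theorem~\ref{npi}, namely that $f$ is invex of order $n$ precisely when every stationary point of order $n$ is a global minimizer of $f$. The core observation I would exploit is that the stationarity conditions become more stringent as the order grows, so that a stationary point of order $(n+1)$ is automatically a stationary point of order $n$.

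First I would verify this monotonicity of the stationarity notion. If $\bar x$ is a stationary point of order $(n+1)$, then by definition there are multipliers $x_1^*,x_2^*,\dots,x_n^*$ satisfying (\ref{30}) and (\ref{26}) up to index $n+1$; in particular $\ld 1 f(\bar x;u)\ge 0$ and $\ld i f(\bar x;x_1^*,\dots,x_{i-1}^*;u)\ge 0$ hold for all $u\in\E$ and all $i=2,\dots,n+1$. Dropping the data carrying the index $n+1$ leaves exactly the conditions (\ref{30}) and (\ref{26}) of order $n$, now witnessed by the truncated tuple $x_1^*,\dots,x_{n-1}^*$. Hence $\bar x$ is a stationary point of order $n$, and every stationary point of order $(n+1)$ is a stationary point of order $n$.

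With this in hand the first claim is immediate. Assuming $f$ is invex of order $n$, Theorem~\ref{npi} makes every stationary point of order $n$ a global minimizer of $f$, so by the inclusion just proved every stationary point of order $(n+1)$ is a global minimizer as well; applying the converse implication of Theorem~\ref{npi} with the integer $n+1$ then shows that $f$ is invex of order $(n+1)$. For the statement about order $+\infty$ the same argument applies with no extra work: a stationary point of order $+\infty$ satisfies (\ref{30}) and (\ref{26}) for every positive integer and is therefore, in particular, a stationary point of order $n$, hence a global minimizer; Theorem~\ref{npi} applied with $n=+\infty$ then delivers invexity of order $+\infty$.

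The one place demanding care is the bookkeeping of the multipliers in the inclusion of the second paragraph: one must check that the existence clause built into the higher-order derivatives (the successive subdifferentials $\lsubd i f(\bar x;x_1^*,\dots,x_{i-1}^*)$ must be nonempty for the order-$n$ derivative to be defined) is inherited under truncation, which it is, since the order-$n$ data invokes strictly fewer of these subdifferentials than the order-$(n+1)$ data. Should one prefer to bypass Theorem~\ref{npi}, the result follows directly from the definition of invexity: given a pair $(\bar x,x)$ admitting multipliers satisfying (\ref{7}) for order $(n+1)$, invexity of order $n$ supplies $\eta_1,\dots,\eta_n$ for which (\ref{8}) holds for every truncated tuple; setting $\eta_{n+1}=0$ and noting that evaluation of the highest derivative in the zero direction gives $\ld {n+1} f(\bar x;x_1^*,\dots,x_n^*;0)\le 0$ (exactly the computation carried out in the proof of Theorem~\ref{npi}), appending this nonpositive term preserves inequality (\ref{8}) at order $(n+1)$.
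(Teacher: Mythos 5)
Your proof is correct, but your primary route differs from the paper's. The paper argues directly from the definition of invexity of order $n$: given a pair $(\bar x,x)$ admitting multipliers for order $(n+1)$, it keeps the same maps $\eta_1,\dots,\eta_n$, sets $\eta_{n+1}=0$, and uses $\ld {n+1} f(\bar x;x_1^*,\dots,x_n^*;0)\le 0$ to preserve inequality (\ref{8}) --- exactly the ``bypass'' argument you sketch in your final paragraph. Your main argument instead deduces everything from the characterization in Theorem \ref{npi}, after first checking that stationarity is monotone in the order (a stationary point of order $(n+1)$, or of order $+\infty$, is a stationary point of order $n$ by truncating the multiplier tuple). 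This is legitimate and non-circular, since Theorem \ref{npi} is proved before this proposition and does not rely on it. What the two routes buy: the paper's direct construction is self-contained, elementary, and yields explicit invexity data (the same $\eta$'s plus a zero map); your route is heavier machinery but handles the order-$+\infty$ claim more cleanly and uniformly --- the direct proof, applied to $n=+\infty$, would require setting $\eta_m=0$ for all $m>n$ and dealing with the resulting infinite sum in (\ref{8}), a point the paper glosses over, whereas your argument reduces it to the finite-order case plus the $n=+\infty$ instance of Theorem \ref{npi}. The only blemishes in your write-up are small indexing slips (for order $(n+1)$ the tuple in (\ref{30}) runs up to $x_{n+1}^*$, and the truncation for order $n$ retains $x_1^*,\dots,x_n^*$), but these mirror an inconsistency already present in the paper's own numbering and do not affect the argument.
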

\begin{proof}
It follows from Equation (\ref{8})  that $f$ is invex of order $(n+1)$ keeping the same maps $\eta_1$, $\eta_2$,..., $\eta_n$ and taking $\eta_{n+1}=0$, because
$\ld {n+1} f(\bar x;x^*_1,x^*_2,\dots,x^*_n;0)\le 0$ for all elements of the respective subdifferentials $x^*_1$, $x^*_2$,...,$x^*_n$.
\end{proof}

The converse claim is not satisfied. There are a lot of second-order invex functions, which are not invex. The following example is extremely simple.
\begin{example}
Consider the function $f:\R^2\to\R$ defined by 
\[
f(x_1,x_2)=-x_1^2-x_2^2.
\]
We have $\ld 1 f(x;u)=-2x_1 u_1-2x_2 u_2$ where $u=(u_1,u_2)$ is a direction. Its only stationary point is $\bar x=(0,0)$. This point is not a global minimizer. Therefore, the function is not invex. We have $\lsubd 1 f(\bar x)\equiv \{(0,0)\}$ and $\ld 2 f(\bar x;0;u)=-2u_1^2-2u_2^2$. It follows from here that $f$ has no second-order stationary points. Hence, every second-order stationary point is a global minimizer, and the function is second-order invex.
\end{example}

\begin{example}\label{npc}
Consider the function $f_n:\R\to\R$, where $n\ge 2$ is a positive integer:
\[
f_n(x)=\left\{
\begin{array}{rr}
x^n\,, & x\ge 0\,, \\
(-1)^{n-1}x^n\,, & x<0\,.
\end{array}\right.
\]
If $n$ is an odd number, then $f_n=x^n$. For $n$ even $f_n$ is a function from the class C$^{n-1}$ but not from the class C$^n$.
It has no stationary points of order $n$. Therefore, every stationary point of order $n$ is a global minimizer. According to Theorem \ref{npi}, it is invex of order $n$. On the other hand, the point $x=0$ is stationary of order $(n-1)$. Taking into account that $x=0$ is not a global minimizer, we conclude from the same theorem that the function is not invex of order $(n-1)$.
 
\end{example}

\section{Second-order conditions for a special class of functions in terms of the lower Dini derivatives}
\label{s6}
\setcounter{equation}{0}
In this section, we derive optimality conditions for an isolated minimum of
order two for a special class of functions.

Strongly pseudoconvex functions were introduced by Diewert, Avriel and Zang \cite{d-a-z81}. Their definition assumes
additionally strict pseudoconvexity. It was proved by Hadjisavvas and Schaible
\cite{had93} that in the differentiable case, strict pseudoconvexity of the function is superfluous; in other words each function, which satisfies the next definition is strictly pseudoconvex.

%The following notion of a strongly pseudoconvex function is due to
%Diewert, Avriel, Zang \cite{d-a-z81}.
\begin{definition}[\cite{had93}]\label{def2}
Let $S$ be an open convex subset of $\E$. A Fr\'echet differentiable function
$f:S\to\R$ is said to be strongly pseudoconvex iff, for all $x\in S$,
$u\in\E$ such that $\norm{u}=1$ and $\nabla f(x)(u)=0$, there exist positive
numbers $\delta$ and $\alpha$ with $x+\delta u\in S$ and
\[
f(x+t u)\ge f(x)+\alpha t^2,\quad 0\le t<\delta.
\]
\end{definition}

Recall the definition of a strongly pseudoconvex function in terms of lower Dini directional derivative, which were introduced by Biancki \cite{bia96}.

\begin{definition}
The lower Dini directional derivative of a function $\f$ at the point $x\in\dom f$ in
direction $u\in\E$ is defined as follows:
\[
\dini f(x;u)=\liminf_{t\downarrow 0}\, t^{-1}[f(x+tu)-f(x)].
\]
\end{definition}

We adopt the next two definitions to proper extended real functions:

\begin{definition}[\cite{bia96}]\label{DefBia96}
A proper extended function $\f$ is said to be strongly pseudoconvex iff, 
%\noindent
\begin{enumerate}
\item[{\rm (i)}] $f$ is strictly pseudoconvex, that is for all $x\in\dom f $, $u\in\E$ such that $\norm{u}=1$ the condition 
\[
\dini f(x;u)\ge 0\quad {\rm implies}\quad f(x+tu)>f(x),\;\forall\; t>0;
\]
%\noindent
\item[{\rm (ii)}] if $\dini f(x;u)=0$, then there exist positive
numbers $\delta$ and $\alpha$ with
\[
f(x+t u)\ge f(x)+\alpha t^2,\quad\forall t\in[0,\delta).
\]
\end{enumerate}
\end{definition}

It was shown by example in \cite{bia96} that Condition (i) cannot be omitted in Definition \ref{DefBia96}, that is Condition (ii) does not imply Condition (i).

The following notion extends the Lipschitz continuity of the gradient, which were applied in the sufficient conditions due to Ben-Tal and Zowe \cite{bt85}:

\begin{definition}[\cite{pas08}]
A proper extended function $\f$ is called $\ell$-stable at the point $x\in\dom f$ iff there exist a neigbourhood $U$ of $x$ and a constant $K>0$ such that 
\[
|\dini f(y;u)-\dini f(x;u)|\le K\,\norm{y-x}\,\norm{u},\quad\forall y\in U\cap\dom f,\;\forall u\in\E.
\]
\end{definition}

\begin{proposition}[\cite{pas08}]\label{pr1}
Let the proper extended function $\f$ be continuous near $x\in\dom f$ and $\ell$-stable at $x$. Then $f$ is strictly differentiable at $x$, hence Fr\'echet differentiable at $x$.
\end{proposition}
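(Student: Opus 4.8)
The plan is to show that $\ell$-stability forces the lower Dini directional derivative $u\mapsto\dini f(x;u)$ to be a continuous linear functional $L$, together with a uniform quadratic approximation of $f$ by $L$ near $x$; this is exactly strict differentiability. Recall that $f$ is strictly differentiable at $x$ with derivative $L$ iff $[f(y^\pr)-f(y)-L(y^\pr-y)]/\norm{y^\pr-y}\to 0$ as $y,y^\pr\to x$ with $y\ne y^\pr$, and that taking $y=x$ recovers Fr\'echet differentiability; so the whole statement reduces to (a) linearity of $\dini f(x;\cdot)$ and (b) a quadratic remainder estimate based at points $y$ near $x$.

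First I would fix a direction $u$ and study the one-variable function $\ph(t)=f(x+tu)$, which is continuous near $t=0$ since $f$ is continuous near $x$. A short computation identifies its lower right-hand Dini derivate at $t$ with $\dini f(x+tu;u)$ and its upper left-hand Dini derivate with $-\dini f(x+tu;-u)$. By $\ell$-stability both lie within $Kt\norm{u}^2$ of their values at $t=0$, hence are bounded on a small interval. The classical monotonicity theorems for Dini derivates now apply: a continuous function whose lower right derivate is $\ge m$ has increments $\ge m\cdot(\text{length})$, and one whose upper left derivate is $\le M$ has increments $\le M\cdot(\text{length})$. Combining these two one-sided bounds on every subinterval shows that $\ph$ is Lipschitz near $0$. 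This step is the main obstacle, since $\ell$-stability only supplies a one-sided ($\liminf$) bound on difference quotients; the monotonicity theorems are precisely what convert that into genuine two-sided control of increments.

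Being Lipschitz, $\ph$ is absolutely continuous and differentiable almost everywhere, with $\ph^\pr(t)=\dini f(x+tu;u)$ wherever the derivative exists (there all four derivates coincide). Integrating $\ph^\pr$ and inserting the $\ell$-stability bound on the integrand yields the sharp estimate
\[
|f(x+hu)-f(x)-h\,\dini f(x;u)|\le\tfrac{1}{2}K\norm{u}^2h^2 .
\]
Running the same argument from a base point $y$ near $x$ and then replacing $\dini f(y;u)$ by $\dini f(x;u)$ via $\ell$-stability, while using positive homogeneity $\dini f(x;hu)=h\,\dini f(x;u)$ to write $w=hu$, gives
\[
|f(y+w)-f(y)-\dini f(x;w)|\le K\norm{w}\bigl(\tfrac{1}{2}\norm{w}+\norm{y-x}\bigr).
\]

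Finally, writing $g(w)=\dini f(x;w)$, I would deduce linearity. The function $g$ is positively homogeneous of degree one with $g(0)=0$, and applying the last estimate at $y=x$ (with $w=w_1$ and with $w=w_1+w_2$) and at $y=x+w_1$ (with $w=w_2$), then adding, produces
\[
|g(w_1+w_2)-g(w_1)-g(w_2)|\le C\,(\norm{w_1}+\norm{w_2})^2 .
\]
Replacing $w_i$ by $sw_i$, using homogeneity on the left-hand side, dividing by $s$ and letting $s\downarrow 0$ forces additivity; together with homogeneity this makes $g=L$ linear, and $L$ is continuous because $\E$ is finite-dimensional. Substituting $L$ for $\dini f(x;\cdot)$ in the second displayed estimate and letting $y,y^\pr\to x$ gives strict differentiability, with Fr\'echet differentiability as the special case $y=x$.
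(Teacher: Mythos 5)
The first thing to say is that the paper contains no proof of Proposition \ref{pr1} at all: it is imported verbatim from Bedna\v r\'ik and Pastor \cite{pas08}, so your argument can only be judged on its own merits and against the published proof, not against anything in this text. On its merits it is correct. The identification of the one-dimensional derivates of $\ph(t)=f(x+tu)$ is right (lower right derivate equals $\dini f(x+tu;u)$, upper left equals $-\dini f(x+tu;-u)$); $\ell$-stability does convert these into bounds that are uniform on a small interval; and the monotonicity theorems you invoke are valid for any one of the four Dini derivates of a continuous function, so the Lipschitz conclusion stands. The genuinely good idea is the next step: since $\ph$ is Lipschitz, it is differentiable a.e.\ with $\ph^\pr(t)=\dini f(x+tu;u)$, and $\ell$-stability is a \emph{two-sided} bound on exactly this quantity, so integrating gives $|f(x+hu)-f(x)-h\,\dini f(x;u)|\le\tfrac{1}{2}K\norm{u}^2h^2$ without ever needing to know in advance that $\dini f(x;u)=-\dini f(x;-u)$ --- which is the sticking point if one tries to use the monotonicity bounds alone, since they naturally produce $-\dini f(x;-u)$ as the upper comparison slope. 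The three-term telescoping at $y=x$ and $y=x+w_1$, followed by the scaling argument, does force additivity of $g=\dini f(x;\cdot)$; additivity plus positive homogeneity gives linearity (oddness from $g(w)+g(-w)=g(0)=0$), continuity is automatic in finite dimension, and your final estimate is precisely the definition of strict differentiability. For comparison, the proof in \cite{pas08} is organized instead around a mean value theorem for lower Dini derivatives of the Diewert type --- the same Lemma \ref{mv} that this paper imports for Theorem \ref{th3}: increments $f(y^\pr)-f(y)$ are sandwiched by Dini derivatives at intermediate points of the segment and then transported to $x$ by $\ell$-stability. That route is shorter but rests on the lsc mean value theorem; yours replaces it by the classical monotonicity theorems plus the fundamental theorem of calculus for Lipschitz functions, which is more elementary and fully self-contained.

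One point you should make explicit rather than tacit: your boundedness-of-derivates step, and the treatment of $g$ as a real-valued function, require $\dini f(x;u)$ (hence also $\dini f(y;u)$ for $y\in U\cap\dom f$) to be finite for every $u$. This is implicit in the way $\ell$-stability is written --- the inequality $|\dini f(y;u)-\dini f(x;u)|\le K\norm{y-x}\norm{u}$ is meaningless otherwise --- but lower Dini derivatives of merely continuous functions can equal $\pm\infty$ (e.g.\ $\pm\sqrt{|t|}$ at $0$), so a referee would want one sentence at the start fixing this reading of the definition before the rest of the argument runs.
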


The following mean-value theorem is due to Diewert \cite{die81}.
\begin{lemma}\label{mv}
Let $\ph:[a,\,b]\to\R\cup\{+\infty\}$ be a lower semicontinuous function of one real variable with $a\in\dom \ph$.
Then there exists $\xi$, $a<\xi\le b$, such that 
\[
\ph(a)-\ph(b)\le \dini \ph(\xi; a-b).
\]
\end{lemma}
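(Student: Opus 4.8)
The plan is to reduce the statement to a fact about a minimizer of an auxiliary lower semicontinuous function. First I would dispose of the trivial case: if $\ph(b)=+\infty$, then $\ph(a)-\ph(b)=-\infty$ and the asserted inequality holds automatically for any admissible $\xi$. Hence I may assume $b\in\dom\ph$ as well, so that both $\ph(a)$ and $\ph(b)$ are finite.

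Next I would subtract off the affine interpolant. Put $s=(\ph(b)-\ph(a))/(b-a)$ and $L(x)=\ph(a)+s(x-a)$, and set $g=\ph-L$. Then $g$ is lower semicontinuous on the compact interval $[a,b]$ and $g(a)=g(b)=0$. Because $L$ is affine, its increments are exactly linear in the step, so for every $\xi$ one has $\dini g(\xi;a-b)=\dini\ph(\xi;a-b)-s(a-b)=\dini\ph(\xi;a-b)-(\ph(a)-\ph(b))$, using that $s(a-b)$ is a finite constant and $\liminf$ is additive with respect to finite constants. Consequently the claim $\ph(a)-\ph(b)\le\dini\ph(\xi;a-b)$ is equivalent to the single inequality $\dini g(\xi;a-b)\ge 0$, and it suffices to exhibit one point $\xi\in(a,b]$ at which it holds.

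Now I would invoke compactness together with lower semicontinuity: $g$ attains its minimum $m$ on $[a,b]$ at some point. Since $g(a)=g(b)=0$, we have $m\le 0$, and I claim a minimizer can always be chosen in $(a,b]$. Indeed, if $m<0$ then no minimizer equals $a$ or $b$, so every minimizer lies in $(a,b)$; if $m=0$ then $b$ itself is a minimizer. Fix such a minimizer $\xi\in(a,b]$. Because $\xi>a$, the point $\xi+t(a-b)=\xi-t(b-a)$ stays in $[a,b]$ for all sufficiently small $t>0$, and minimality gives $g(\xi+t(a-b))\ge g(\xi)$; therefore every difference quotient $t^{-1}[g(\xi+t(a-b))-g(\xi)]$ is nonnegative, whence $\dini g(\xi;a-b)\ge 0$, which is what was needed.

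The steps are all routine; the only point demanding care is the choice of the minimizer in the half-open interval $(a,b]$, so that the backward direction $a-b$ keeps us inside $[a,b]$. This is exactly where the endpoint values $g(a)=g(b)=0$ are used, to prevent $\xi=a$ from being forced as the unique minimizer. Lower semicontinuity enters only to guarantee that the minimum is attained; no continuity or finiteness of $\ph$ on the open interval is required.
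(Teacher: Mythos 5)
Your proof is correct, and it fills a genuine gap rather than paralleling anything in the paper: the paper states this lemma without proof, attributing it to Diewert \cite{die81}, so there is no internal argument to compare with. Your route is the standard one for nonsmooth mean-value theorems: subtract the affine interpolant so that $g=\ph-L$ is lower semicontinuous with $g(a)=g(b)=0$, use compactness to obtain a minimizer, argue that a minimizer can be taken in $(a,b]$ (in the interior when the minimum is negative, at $b$ when it is zero), deduce $\dini g(\xi;a-b)\ge 0$ from one-sided minimality along the backward direction, and translate back via $\dini g(\xi;a-b)=\dini\ph(\xi;a-b)-(\ph(a)-\ph(b))$. Two points deserve to be made explicit. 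First, the identity relating $\dini g$ and $\dini\ph$ should only be asserted at points $\xi$ with $\ph(\xi)<+\infty$ (otherwise both difference quotients involve $\infty-\infty$); this is harmless because your minimizer satisfies $g(\xi)\le g(a)=0$, hence $\ph(\xi)$ is finite, which is exactly where you use the identity. Second, in your trivial case $\ph(b)=+\infty$ the left-hand side is $-\infty$, but the paper defines $\dini\ph(\xi;\cdot)$ only for $\xi\in\dom\ph$; if $\dom\ph\cap(a,b]=\emptyset$ (e.g.\ $\ph\equiv+\infty$ on $(a,b]$), the right-hand side is undefined for every admissible $\xi$. That is a defect of the lemma's formulation rather than of your argument; under any standard convention making the derivative $+\infty$ (or the inequality vacuous) at such points, your reduction to the case $b\in\dom\ph$ is exactly right.
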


In the next theorem, we derive necessary and sufficient conditions for an isolated local minimum of second-order of a function, which satisfies Condition (ii) from Definition \ref{DefBia96} at the same given point $\bar x$:

\begin{theorem}\label{th3}
Let the proper extended function $\f$ be continuous near $\bar x\in\dom f$ and $\ell$-stable at $\bar x$.
 Suppose that  $f$ satisfies Condition {\rm (ii)} from Definition \ref{DefBia96} only at the point $\bar x$. %belongs to the class {\rm C$^{1,1}$} in some neighbourhood of $\bar x$. 
%\[
%\sd f(\bar x;u;0):=\lim_{t\downarrow 0,u^\pr\to u}\,
%2t^{-2}[f(x+t u^\pr)-f(x)]
%\]
%for all directions $u\in\E$ (finite or infinite), provided that $0\in\lsubd 1 f(\bar x)$.
Then $\bar x$ is an isolated local minimizer of second-order if and only if $\nabla f(\bar x)=0$.
\end{theorem}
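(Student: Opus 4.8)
The plan is to treat the two implications separately, invoking Proposition \ref{pr1} at the outset: since $f$ is continuous near $\bar x$ and $\ell$-stable there, it is Fr\'echet differentiable at $\bar x$, so $\nabla f(\bar x)$ exists and for every direction $u$ both $\dini f(\bar x;u)$ and $\ld 1 f(\bar x;u)$ equal $\nabla f(\bar x)(u)$. Necessity is then immediate: an isolated local minimizer of second order satisfies $f(x)\ge f(\bar x)+C\norm{x-\bar x}^2\ge f(\bar x)$ on a neighbourhood, hence is an ordinary local minimizer, and differentiability forces $\nabla f(\bar x)=0$ (equivalently $0\in\lsubd 1 f(\bar x)=\{\nabla f(\bar x)\}$).

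For sufficiency I would assume $\nabla f(\bar x)=0$, so that $\dini f(\bar x;u)=0$ for every $u$ and Condition (ii) supplies along each fixed ray a quadratic minorant $f(\bar x+tu)\ge f(\bar x)+\alpha t^2$ for small $t$. The argument is by contradiction: if $\bar x$ is not an isolated minimizer of order two, I choose $x_k\to\bar x$ with $f(x_k)<f(\bar x)+k^{-1}\norm{x_k-\bar x}^2$, set $t_k=\norm{x_k-\bar x}$ and $u_k=(x_k-\bar x)/t_k$, and pass to a subsequence with $u_k\to u$, $\norm{u}=1$. Condition (ii) in the limiting direction $u$ then gives $f(\bar x+t_ku)\ge f(\bar x)+\alpha t_k^2$ for large $k$.

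The decisive step is to transfer this estimate from $\bar x+t_ku$ to the actual point $x_k=\bar x+t_ku_k$, and this is exactly where $\ell$-stability and the mean value inequality of Lemma \ref{mv} enter. Applying that lemma to $\tau\mapsto f(\bar x+t_ku+\tau(u_k-u))$ on $[0,t_k]$---whose lower semicontinuity and whose left endpoint in $\dom f$ follow from continuity of $f$ near $\bar x$---and using positive homogeneity of $\dini f$ in the direction, I obtain $f(\bar x+t_ku)-f(x_k)\le t_k\,\dini f(y;-(u_k-u))$ for an intermediate point $y$ with $\norm{y-\bar x}\le t_k(1+\norm{u_k-u})$. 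Since $\dini f(\bar x;\cdot)\equiv 0$, the $\ell$-stability bound yields $|\dini f(y;-(u_k-u))|\le K\norm{y-\bar x}\,\norm{u_k-u}\le 2Kt_k\norm{u_k-u}$ for large $k$, whence $f(x_k)\ge f(\bar x)+t_k^2(\alpha-2K\norm{u_k-u})$. As $\norm{u_k-u}\to 0$, the bracket eventually exceeds $\alpha/2$, contradicting $f(x_k)<f(\bar x)+k^{-1}t_k^2$.

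I expect the main obstacle to be precisely this passage from the direction-dependent minorant that Condition (ii) provides to a uniform quadratic lower bound: Condition (ii) is assumed only at $\bar x$ and so cannot be applied at the nearby points $x_k$, and the sole instrument linking the behaviour along the limiting ray to that along the perturbed rays is the Lipschitz control of the Dini derivative furnished by $\ell$-stability. The remaining care is bookkeeping---verifying the hypotheses of Lemma \ref{mv} and keeping all intermediate points inside the neighbourhood where $\ell$-stability is valid.
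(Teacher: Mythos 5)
Your proposal is correct and follows essentially the same route as the paper's proof: necessity via Proposition \ref{pr1}, and sufficiency by contradiction using the sequences $t_k$, $u_k\to u$, Condition (ii) along the limiting direction, and Diewert's mean-value theorem (Lemma \ref{mv}) combined with $\ell$-stability to transfer the quadratic bound between $\bar x+t_ku$ and $x_k=\bar x+t_ku_k$. The only cosmetic difference is the direction of the transfer (you push the lower bound onto $f(x_k)$, while the paper pushes the upper bound $\liminf t_k^{-2}[f(x_k)-f(\bar x)]\le 0$ onto $f(\bar x+t_ku)$), which is the same estimate read in reverse.
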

\begin{proof}
Let $\bar x$ be an isolated local minimizer of second-order.
We conclude from Proposition \ref{pr1} that $\nabla f(\bar x)$ exists. Then it is obvious that  $\nabla f(\bar x)=0$.

We prove the converse claim. Suppose that $\nabla f(\bar x)=0$, but
$\bar x$ is not an isolated local minimizer of second-order. Therefore,
for every sequence $\{\varepsilon_k\}_{k=1}^\infty$ of positive numbers
converging to zero, there exists a sequence $\{x_k\}$, $x_k\in\dom f$ such that
\begin{equation}\label{27}
\norm{x_k-\bar x}\le \varepsilon_k,\quad
f(x_k)< f(\bar x)+\varepsilon_k\norm{x_k-\bar x}^2.
\end{equation}
It follows from here that $x_k\to \bar x$. Denote $t_k=\norm{x_k-\bar x}$,
$d_k=(x_k-\bar x)/t_k$.  Passing to a subsequence, we may suppose
that the sequence $\{d_k\}_{k=1}^\infty$ is convergent and $d_k\to  d$,  where $\norm{d}=1$.
Therefore
\begin{equation}\label{25}
\liminf_{k\to\infty}\, t^{-2}_k
[f(\bar x+t_k d_k)-f(\bar x)]=\liminf_{k\to \infty}\, t^{-2}_k[f(x_k)-f(\bar x)]\le\lim_{k\to \infty}\, \varepsilon_k=0.
\end{equation}
We have
\[
f(\bar x+t_k d)-f(\bar x)=[f(\bar x+t_k d)-f(\bar x+t_kd_k)]+[f(\bar x+t_kd_k)-f(\bar x)],
\]
since $f(x_k)$ is finite by (\ref{27}).
It follows from Diewert's mean-value theorem that there exists $\theta_k\in[0,1)$ with
\[
f(\bar x+t_k d)-f(\bar x+t_k d_k)\le D_{-} f(y_k;t_k(d-d_k))=t_k\, \dini f(y_k;d-d_k)
\]
where $y_k=\bar x+t_k d_k+t_k\theta_k(d-d_k)$.
Since $\nabla f(\bar x)=0$ and $f$ is $\ell$-stable at $\bar x$, then there exists $K>0$ such that
\[ 
|\dini f(y_k;d-d_k)|\le K\,\norm{y_k-\bar x}\,\norm{d-d_k}
\]
for all sufficiently large integers $k$. 
Taking into account that $d_k\to d$ when $k\to\infty$, we conclude  that
%\begin{equation}\label{18}
%t_k^{-2}|f(\bar x+t_k d)-f(\bar x)|\to 0.
%\end{equation}
%Hence, by (\ref{25}),
\[
\liminf_{k\to\infty}\, t^{-2}_k
[f(\bar x+t_k d)-f(\bar x)]\le 0.
\]
On the other hand, according to the hypothesis   $f$ satisfies Condition (ii) from Definition \ref{DefBia96}. Therefore
\[
f(\bar x+t_k d)\ge f(\bar x)+\alpha t_k^2
\]
for all sufficiently large $k$. Hence,
\[
\liminf_{k\to\infty}\, t^{-2}_k
[f(\bar x+t_k d)-f(\bar x)]\ge\liminf_{k\to\infty}\,\alpha=\alpha>0,
\]
which is a contradiction.
\end{proof}

The following example shows that Theorem \ref{th3} is not true for functions, which are not $\ell$-stable:

\begin{example}\label{ex1}
Consider the function 
\[
%f=(\max(0,x_2-\sqrt[3]{x_1^4}))^{4/3}+(\max(0,\sqrt[3]{x_1^4}-x_2))^{3/2}.
f=|\,x_2-\sqrt[3]{x_1^4}\,|^{\, 3/2}.
\]
Of course, the point $\bar x=(0,0)$ is a local and global minimizer, but it is not an isolated local minimizer of order two. Even it is not a strict local minimizer, because $f(x)=0$ for all $x=(x_1,x_2)$ over the curve $x_2=x_1^{4/3}$. We have $\nabla f(\bar x)=(0,0)$.
%The function $f$ belongs to the class $\rm{C}^1(\R^2)$, but
%$f\notin\rm{C}^{1,1}(\R^2)$. 
Simple calculations show that this function satisfy Condition {\rm (ii)} from Definition \ref{DefBia96} at $\bar x$.  Let $v=(v_1,v_2)$ be an arbitrary vector, whose norm is $1$. If $v_2>0$ or $v_2<0$, then 
\[
\lim_{t\downarrow 0}\,[f(\bar x+tv)-f(\bar x)]/t^2=\lim_{t\downarrow 0}\, f(tv)/t^2=+\infty.
\]
 If $v_2=0$, then $v_1=\pm 1$ and 
\[
\lim_{t\downarrow 0}\,[f(\bar x+tv)-f(\bar x)]/t^2=\lim_{t\downarrow 0}\, f(tv)/t^2=1.
\]
Therefore, for every $v\in\R^2$ there exists $\delta>0$ and $C>0$ such that 
\[
f(tv)>Ct^2 \quad\textrm{for all}\quad t\in(0,\delta).
\]
The sufficient conditions of Theorem \ref{th3} are not satisfied, because $f$ is not $\ell$-stable at $\bar x$. Indeed,
%We can easy verify that its gradient is not locally Lipschitz. For example, $\nabla f$ do not satisfy the
%Lipschitz condition in a neighborhood of $\bar x=(0,0)$. 
if we take
$x^\pr_k=(0,k^{-1})$, then $\nabla f(x_k)=(0,3/2k^{-1/2})$ and there do not exist $K>0$ such that 
\[
\norm{\nabla f(x^\pr_k)-\nabla f(\bar x)}\le K\norm{x^{\pr}_k-\bar x}
\]
for all sufficiently large integers $k$. We have
\[
\lim_{k\to+\infty}\norm{\nabla f(x^{\pr}_k)}\, / \,
\norm{x^{\pr}_k}=+\infty.
\]
\end{example}

\section{Higher-order conditions for a special class of functions in terms of the lower Hadamard derivatives}
\label{s7}
\setcounter{equation}{0}
We introduce the following notion:
\begin{definition}
We say that a proper extended real function $\f$ belongs to the class $\mathcal F_2(x)$  iff, $x\in\dom f$ and for every $u\in\E$ such that
$\norm{u}=1$, $\ld 1 f(x;u)=0$,
there exist positive numbers $\eps$, $\delta$, and $\alpha$ which satisfy the inequality
\begin{equation}\label{19}
f(x+tu^\pr)\ge f(x)+\alpha t^2,
\end{equation}
for all $t\in\R$ and $u^\pr\in\E$ with $0\le t<\delta$, $\norm{u^\pr-u}<\varepsilon$. 
\end{definition}
The class containing all functions $f$ such that $f\in\mathcal F_2(x)$ for every $x\in\E$, and which are additionally strictly pseudoconvex coincides with the set of all strongly pseudoconvex functions with respect to the lower Hadamard directional derivative.

\begin{definition}
Let $x\in\E$. For any positive integer $n\ge 3$, we say that a proper extended real function $\f$ belongs to the class $\mathcal F_n(x)$  iff, $x\in\dom f$ and  for every $u\in\E$ such that $\norm{u}=1$, 
\begin{equation}\label{22}
0\in\lsubd 1 f(x),\quad 0\in\lsubd i f (x;x_1^*,x_2^*,\dots,x_{i-1}^*),\; i=2,3,\dots, n-2,
\end{equation}
\begin{equation}\label{23}
\ld 1 f(x;u)=0,\quad \ld i f(x;\underbrace{0,0,\dots ,0}_{(i-1)-times};u)=0,\; i=2,3,\dots,n-1,
\end{equation}
there exist positive numbers $\eps$, $\delta$, and $\alpha$ which satisfy the inequality
\begin{equation}\label{24}
f(x+tu^\pr)\ge f(x)+\alpha t^n,
\end{equation}
for all $t\in\R$ and $u^\pr\in\E$ with $0\le t<\delta$, $\norm{u^\pr-u}<\varepsilon$. 
\end{definition}

\begin{theorem}\label{th5}
Let $n\ge 2$ be a positive integer, $\bar x\in\E$, and $f\in\mathcal F_n(\bar x)$.
Then $\bar x$ is an isolated local minimizer of order $n$ if and only if 
\begin{equation}\label{20}
0\in\lsubd 1 f(\bar x),\quad 0\in\lsubd i f (\bar x;\underbrace{0,0,\dots ,0}_{(i-1)-times}),\; i=2,3,\dots, n-1.
\end{equation}
\end{theorem}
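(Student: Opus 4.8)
The plan is to prove both implications by reducing to the general characterization of isolated minimizers of order $n$ already established in Theorem \ref{th2}, and then using the defining property of the class $\mathcal F_n(\bar x)$ to bypass the $n$-th order derivative condition (\ref{35}).

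For the forward implication, suppose $\bar x$ is an isolated local minimizer of order $n$. By Theorem \ref{th2}, Conditions (\ref{4}) and (\ref{35}) both hold. Now (\ref{4}) reads precisely
\[
0\in\lsubd 1 f(\bar x),\quad 0\in\lsubd i f(\bar x;\underbrace{0,0,\dots ,0}_{(i-1)-times}),\quad i=1,2,\dots, n-1,
\]
which is exactly (\ref{20}). So this direction is immediate and requires no use of membership in $\mathcal F_n(\bar x)$; the class hypothesis is only needed for the converse.

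For the converse, assume (\ref{20}) holds and prove that $\bar x$ is an isolated local minimizer of order $n$. By Theorem \ref{th2}, it suffices to verify the remaining condition (\ref{35}), namely $\ld n f(\bar x;0,\dots,0;u)>0$ for every $u\ne 0$; equivalently, by the positive homogeneity of the order-$n$ derivative, it suffices to check this for every unit vector $u$. Here I would split on whether $\ld 1 f(\bar x;u)=0$ or $\ld 1 f(\bar x;u)\neq 0$. If $\ld 1 f(\bar x;u)\ne 0$, then since $0\in\lsubd 1 f(\bar x)$ forces $\ld 1 f(\bar x;u)\ge 0$, we get $\ld 1 f(\bar x;u)>0$; then for all small $t>0$ and $u^\pr$ near $u$ we have $f(\bar x+tu^\pr)-f(\bar x)\ge \beta t$ for some $\beta>0$, and dividing by $t^n$ and taking $\liminf$ gives $\ld n f(\bar x;0,\dots,0;u)=+\infty>0$. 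The substantive case is $\ld 1 f(\bar x;u)=0$. Here I would argue that (\ref{20}) together with the intermediate-order derivatives all vanishing (which follows from $0$ lying in each successive subdifferential, so that each $\ld i f(\bar x;0,\dots,0;u)\ge 0$) places $u$ exactly in the regime (\ref{22})--(\ref{23}) where the definition of $\mathcal F_n(\bar x)$ applies.

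The main obstacle is the bookkeeping needed to confirm that the unit direction $u$ genuinely satisfies the full antecedent (\ref{23}) of the class definition, i.e.\ that $\ld i f(\bar x;0,\dots,0;u)=0$ for all $i=1,\dots,n-1$, not merely $\ge 0$. The inequalities $\ge 0$ come directly from $0\in\lsubd i f$; the reverse inequalities must be extracted, which I would handle by an induction on $i$ exactly mirroring the contradiction argument in the proof of Theorem \ref{th2}: if some intermediate derivative were strictly positive the conclusion (\ref{35}) would already follow for that direction, so one may assume they all vanish up to order $n-1$. Once $u$ satisfies (\ref{22})--(\ref{23}), the membership $f\in\mathcal F_n(\bar x)$ yields positive constants $\eps,\delta,\alpha$ with $f(\bar x+tu^\pr)\ge f(\bar x)+\alpha t^n$ for $0\le t<\delta$, $\norm{u^\pr-u}<\eps$. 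Substituting into Definition \ref{def3} gives
\[
\ld n f(\bar x;\underbrace{0,0,\dots ,0}_{(n-1)-times};u)=\liminf_{t\downarrow 0,u^\pr\to u}\,n!\,t^{-n}[f(\bar x+tu^\pr)-f(\bar x)]\ge n!\,\alpha>0,
\]
establishing (\ref{35}). Theorem \ref{th2} then delivers that $\bar x$ is an isolated local minimizer of order $n$, completing the proof.
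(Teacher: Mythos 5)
Your proof is correct, and it takes a genuinely different, more modular route than the paper's. The paper does not reduce the converse to Theorem \ref{th2}; it argues by contradiction from scratch: if $\bar x$ were not an isolated minimizer of order $n$, there would exist $\eps_k\downarrow 0$ and $x_k\to\bar x$ with $f(x_k)<f(\bar x)+\eps_k\norm{x_k-\bar x}^n$; writing $t_k=\norm{x_k-\bar x}$ and passing to a limiting unit direction $d$ of $(x_k-\bar x)/t_k$, the sequence forces $\ld m f(\bar x;0,\dots,0;d)\le 0$ for all $1\le m\le n$, while (\ref{20}) forces the derivatives of order $m<n$ to be $\ge 0$, hence zero; thus $d$ satisfies (\ref{22})--(\ref{23}), and the class property (\ref{24}), applied at this \emph{single} direction, yields $\ld n f(\bar x;0,\dots,0;d)\ge n!\,\alpha>0$, contradicting the bound $\le 0$ obtained from the same sequence. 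You instead treat Theorem \ref{th2} as a black box and verify its hypothesis (\ref{35}) direction by direction via a dichotomy: either every derivative of order $i\le n-1$ at $u$ vanishes, in which case $f\in\mathcal F_n(\bar x)$ gives the growth estimate (\ref{24}) and hence $\ld n f(\bar x;0,\dots,0;u)\ge n!\,\alpha>0$, or some order-$i$ derivative is strictly positive (it cannot be negative, by (\ref{20})), and then the uniform bound $f(\bar x+tu^\pr)-f(\bar x)\ge ct^i/i!$ forces $\ld n f(\bar x;0,\dots,0;u)=+\infty$ since $t^{i-n}\to+\infty$; both cases are exhaustive, so your ``bookkeeping'' step is sound and not a gap. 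What your approach buys is reuse of the already proved characterization (no repetition of the sequence/compactness argument); what it costs is checking the dichotomy at every unit direction and invoking degree-$n$ positive homogeneity (which the paper does assert, in the proof of Theorem \ref{npi}) to pass from unit vectors to all $u\ne 0$, whereas the paper needs the class property only at the one limiting direction its contradiction produces. (A cosmetic difference: for the forward implication the paper cites the necessary conditions of Theorem \ref{th1} rather than Theorem \ref{th2}; either works.)
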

\begin{proof}
It follows from Theorem \ref{th1} that the condition $\bar x$  is an isolated local minimizer
of order $n$ implies that Inclusions (\ref{20}) are satisfied.

Conversely, suppose that Inclusions (\ref{20}) are satisfied. We prove that $\bar x$ is an isolated local  minimizer of order $n$.
Assume the contrary.  Therefore, for every sequence $\{\varepsilon_k\}_{k=1}^\infty$ of positive numbers
converging to zero, there exists a sequence $\{x_k\}$, $x_k\in\dom f$ such that
\begin{equation}\label{21}
\norm{x_k-\bar x}\le \varepsilon_k,\quad
f(x_k)< f(\bar x)+\varepsilon_k\norm{x_k-\bar x}^n,
\end{equation}

It follows from (\ref{21}) that $x_k\to\bar x$. Denote $t_k=\norm{x_k-\bar x}$,
$d_k=(x_k-\bar x)/t_k$. Passing to a subsequence, we may suppose without loss of generality
that $d_k\to  d$,  where $\norm{d}=1$. 

Let $m$ be an arbitrary positive integer with $1\le m\le n$.
%\[
%\ld i f(\bar x;d;\underbrace{0,0,\dots ,0)}_{(i-1)-times}=0,\quad 0\in\lsubd i f (\bar x;\underbrace{0,0,\dots ,0}_{(i-1)-times}),\quad \forall i<m.
%\]
It follows from (\ref{20}) that
\[
\ld m f (\bar x;\underbrace{0,0,\dots ,0}_{(m-1)-times};d)=\liminf_{k\to\infty}\,m!\, t^{-m}_k[f(\bar x+t_k d_k)-f(\bar x)]\le\liminf_{k\to 0}\, m!\, t^{n-m}\varepsilon_k=0
\]
if $m>1$ or $\ld 1 f(\bar x;d)\le 0$ if $m=1$.
If $1<m<n$, then we conclude from 
\[
0\in\lsubd m f (\bar x;\underbrace{0,0,\dots ,0}_{(m-1)-times})
\]
that $\ld m f (\bar x;\underbrace{0,0,\dots ,0}_{(m-1)-times};v)\ge0$ for all $v\in\E$. Therefore $\ld m f (\bar x;\underbrace{0,0,\dots ,0}_{(m-1)-times};d)=0$. Using similar arguments, we can prove that $\ld 1 f(\bar x;d)=0$. 
%It follows from $d_k\to d$ that
%$\ld n f (\bar x;d;\underbrace{0,0,\dots ,0)}_{(n-1)-times})\le 0$. 
On the other hand, by Inequality (\ref{24}) we obtain that
\[
\ld n f (\bar x;\underbrace{0,0,\dots ,0}_{(n-1)-times};d)=\liminf_{t\downarrow 0,d^\pr\to d} n!\,t^{-n} [f(\bar x+t d^\pr)-f(\bar x)]
\ge\liminf_ {t\downarrow 0,d^\pr\to d}\,n!\,\alpha=n!\,\alpha>0,
\]
which is a contradiction.
\end{proof}

\begin{proposition}\label{pr2}
Let $x\in\E$. Then $\mathcal F_{n-1}(x)\subset\mathcal F_n(x)$ for every positive integer $n$, $n\ge 3$.
\end{proposition}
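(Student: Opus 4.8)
The plan is to verify the definition of $\mathcal F_n(x)$ directly for an arbitrary $f\in\mathcal F_{n-1}(x)$. The guiding observation is that, in passing from $\mathcal F_{n-1}(x)$ to $\mathcal F_n(x)$, the hypotheses that a unit direction $u$ must satisfy in order to trigger the defining inequality become \emph{more} restrictive, whereas the inequality one is asked to establish, $f(x+tu^\pr)\ge f(x)+\alpha t^n$, is \emph{weaker}, for small $t$, than the one $\mathcal F_{n-1}(x)$ already guarantees.

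First I would note that membership in $\mathcal F_{n-1}(x)$ forces $x\in\dom f$, so that part of the requirement for $\mathcal F_n(x)$ holds automatically. Then I would fix a unit vector $u$ satisfying the triggering conditions (\ref{22}) and (\ref{23}) of $\mathcal F_n(x)$, namely $0\in\lsubd 1 f(x)$ and $0\in\lsubd i f(x;\underbrace{0,0,\dots ,0}_{(i-1)-times})$ for $i=2,\dots,n-2$, together with $\ld 1 f(x;u)=0$ and $\ld i f(x;\underbrace{0,0,\dots ,0}_{(i-1)-times};u)=0$ for $i=2,\dots,n-1$. (If no such $u$ exists, there is nothing to prove and $f\in\mathcal F_n(x)$ vacuously.) The key, purely combinatorial, step is to observe that these entail the corresponding triggering conditions for $\mathcal F_{n-1}(x)$: there the subdifferential inclusions range only over $i=2,\dots,n-3$ and the derivative vanishings only over $i=2,\dots,n-2$, each being a sublist of what (\ref{22})--(\ref{23}) already supply. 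For the boundary case $n=3$ the relevant class is $\mathcal F_2(x)$, whose single triggering hypothesis is $\ld 1 f(x;u)=0$ with $\norm u=1$; this too is present among the $\mathcal F_3(x)$ hypotheses, so the index nesting covers the base case.

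Hence $u$ triggers the definition of $\mathcal F_{n-1}(x)$, and I would invoke $f\in\mathcal F_{n-1}(x)$ to obtain positive numbers $\eps,\delta,\alpha$ with $f(x+tu^\pr)\ge f(x)+\alpha t^{n-1}$ for all $t\in[0,\delta)$ and all $u^\pr$ with $\norm{u^\pr-u}<\eps$. Shrinking $\delta$ so that $\delta\le 1$, the elementary monotonicity $t^{n-1}\ge t^n$ on $[0,1)$ yields $f(x+tu^\pr)\ge f(x)+\alpha t^{n-1}\ge f(x)+\alpha t^n$, which is exactly inequality (\ref{24}). Since $u$ was an arbitrary triggering direction, this establishes $f\in\mathcal F_n(x)$.

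The main obstacle is not analytic but one of careful bookkeeping: one must confirm that the index ranges in (\ref{22})--(\ref{23}) genuinely contain those in the definition of $\mathcal F_{n-1}(x)$, and one must treat the structurally different base definition $\mathcal F_2(x)$ as a separate case. The only analytic content, the passage from $t^{n-1}$ to $t^n$, is immediate once $\delta\le 1$.
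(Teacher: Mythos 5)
Your proof is correct, and it rests on the same key bookkeeping observation as the paper's: the triggering hypotheses (\ref{22})--(\ref{23}) in the definition of $\mathcal F_n(x)$ contain those of $\mathcal F_{n-1}(x)$ (with the structurally different $\mathcal F_2(x)$ handled as the base case), so any direction $u$ that triggers the order-$n$ implication also triggers the order-$(n-1)$ one, yielding $f(x+tu^\pr)\ge f(x)+\alpha t^{n-1}$ for $0\le t<\delta$, $\norm{u^\pr-u}<\eps$. Where you diverge is the finish. You verify the defining implication of $\mathcal F_n(x)$ directly, via the elementary inequality $t^{n-1}\ge t^n$ on $[0,1)$ after shrinking $\delta$ to $\min\{\delta,1\}$. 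The paper instead argues by contradiction: from the same growth inequality it computes $\ld {n-1} f(x;0,\dots ,0;u)\ge (n-1)!\,\alpha>0$, which contradicts the hypothesis $\ld {n-1} f(x;0,\dots ,0;u)=0$ contained in (\ref{23}); hence, for $f\in\mathcal F_{n-1}(x)$, no unit direction can satisfy the order-$n$ triggering conditions at all, and $f\in\mathcal F_n(x)$ holds vacuously. Both arguments are valid and of comparable length. Yours is more robust in spirit, since it does not depend on noticing the incompatibility and works whether or not the hypothesis set is empty; the paper's buys a sharper structural insight, namely that the inclusion is degenerate --- a function of $\mathcal F_{n-1}(x)$ can never actually activate the defining implication of $\mathcal F_n(x)$, because the vanishing of the $(n-1)$-st lower Hadamard derivative is incompatible with growth of order $n-1$.
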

\begin{proof}
Suppose that there exists a function $f\in\mathcal F_{n-1}(x)$ with $f\notin\mathcal F_n(x)$. Therefore, $f$ satisfies Conditions (\ref{22}) and (\ref{23}), but it does not fulfil (\ref{24}). Then it follows from $f\in\mathcal F_{n-1}(x)$ that there exist positive numbers $\eps$, $\delta$ and $\alpha$ such that the following inequality  holds
\[
f(x+tu^\pr)\ge f(x)+\alpha t^{n-1},
\]
for all $t\in\R$ and $u^\pr\in\E$ with $0\le t<\delta$, $\norm{u^\pr-u}<\varepsilon$. Therefore
\[
\ld {n-1} f (x;\underbrace{0,0,\dots ,0}_{(n-2)-times};u)=\liminf_{t\downarrow 0,u^\pr\to u}\, (n-1)!\, t^{-(n-1)}\, [f(x+t u^\pr)-f(x)]
\]
\[
\ge\liminf_ {t\downarrow 0,u^\pr\to u}\,(n-1)!\,\alpha=(n-1)!\,\alpha>0,
\]
which contradicts the assumption 
$\ld {n-1} f (x;\underbrace{0,0,\dots ,0}_{(n-2)-times};u)=0$. Hence, in the case when $f\in\mathcal F_{n-1}(x)$,
(\ref{22}) and (\ref{23})  cannot be satisfied together. Therefore, the implication $\{(\ref{22}), (\ref{23})\Rightarrow (\ref{24})\}$ is fulfiled. Consequently $f\in\mathcal F_n(x)$.
\end{proof}

The following example shows that the inclusion in Proposition \ref{pr2} is strict:

\begin{example}
Consider the function $f:\R^2\to\R$ defined by
\[
f(x)=|x_1|^n+|x_2|^n.
\]
We prove that $f\in\mathcal F_n(\bar x)$, where $\bar x=(0,0)$. Let $u=(u_1,u_2)$ be a direction. We have 
%\[
%\ld 1 f(x;u)=n x_1|x_1|^{n-2}u_1+n x_2|x_2|^{n-2}u_2.
%\]
\[
\ld 1 f(\bar x;u)=0\quad {\rm and}\quad (0,0)\in\lsubd 1 f(\bar x).
\]
 Therefore Conditions (\ref{22}) and (\ref{23}) hold. We prove that for every $\delta>0$ and each $\eps\in(0,1)$ there exists $\alpha>0$ which satisfies (\ref{24}); in other words there exists $\alpha>0$ such that $\alpha<|u_1^\pr|^n+|u_2^\pr|^n$, when 
\[
(u_1^\pr-u_1)^2+(u_2^\pr-u_2)^2<\eps^2\quad{\rm and }\quad u_1^2+u_2^2=1.
\]
 Assume the contrary that such positive number $\alpha$ does not exist. Therefore
\[
\inf\,\{|u_1^\pr|^n+|u_2^\pr|^n\mid(u_1^\pr-u_1)^2+(u_2^\pr-u_2)^2<\eps^2\}=0.
\]
Therefore, there exist infinite sequences of positive numbers $\{y_k\}$ and $\{z_k\}$ converging to 0 such that
\[
(y_k-u_1)^2+(z_k-u_2)^2<\eps^2,
\]
 which is impossible, because $\norm{u}=1$. Therefore $f\in\mathcal F_n(\bar x)$.

We prove that $f\notin\mathcal F_{n-1}(\bar x)$. Suppose the contrary that  $f\in\mathcal F_{n-1}(\bar x)$. Conditions (\ref{22}) and (\ref{23}) are satisfied when $x=(0,0)$. Suppose that (\ref{24}) is also fulfiled. Therefore, there exist $\delta>0$ and $\alpha>0$ such that $f(\bar x+tu)\ge f(\bar x)+\alpha t^{n-1}$, $0<t<\delta$. Hence
\[
t(|u_1|^n+|u_2|^n)>\alpha,\quad\forall t\in(0,\delta),
\]
which is obviously impossible.
\end{example}

\section{Comparison with some previous results}
\label{s8}
\setcounter{equation}{0}

In several papers Chaney introduced and studied a second-order directional derivative (see, for example, \cite{cha87}) which is called the derivative of Chaney. We recall its definition. 

It is called that a sequence $\{x_k\}$, $x_k\in\R^n$, $x_k\ne x$ converges to a point $x\in\R^n$ in direction $u\in\R^n$, $u\ne 0$ iff the sequence $\{(x_k-x)/\norm{x_k-x}\}$ converges to $u$. 

Let $f:\R^n\to\R$ be a locally Lipschitz function. Denote its Clarke generalized gradient at the point $x$ by $\partial f(x)$. Suppose that $u$ is a nonzero vector in $\R^n$. Denote by $\partial_u f(x)$ the set of all vectors $x^*$ such that there exist sequences $\{x_k\}$ and $\{x_k^*\}$ with $x_k^*\in\partial f(x_k)$, $\{x_k\}$ converges to $x$ in direction $u$, and $\{x_k^*\}$ converges to $x^*$. Really $\partial_u f(x)\subset\partial f(x)$.

\begin{definition}[\cite{cha87}]\label{def5}
Let $f:\R^n\to\R$ be a locally Lipschitz function. Suppose that $x\in\R^n$,  $u\in\R^n$, and $x^*\in\partial_u f(x)$. Then the second-order lower derivative of Chaney $\sld f(x;x^*;u)$ at $(x,x^*)$ in direction $u$ is defined to be the infimum of all numbers
\[
\liminf\,[f(x_k)-f(x)-x^*(x_k-x)]/t^2_k,
\]
taken over all triples of sequences $\{t_k\}$, $\{x_k\}$, and $\{x_k^*\}$ for which
\begin{enumerate} 
\item[{\rm (a)}] $t_k>0$ for each $k$ and $\{x_k\}$ converges to $x$,
\item[{\rm (b)}] $\{t_k\}$ converges to $0$ and $\{(x_k-x)/t_k\}$ converges to $u$,
\item[{\rm (c)}] $\{x_k^*\}$ converges to $x^*$ with $x_k^*\in\partial f(x_k)$ for each $k$.
\end{enumerate}
\end{definition}

The following claims due to Huang and Ng \cite[Theorems 2.2, 2.7 and 2.9]{hua94} are very important necessary and sufficient conditions for optimality in unconstrained optimization. The necessary conditions are generalizations of the respective results due to Chaney \cite[Theorem 1]{cha87} where the function is semismooth.

\begin{proposition}\label{pr4}
Let $\bar x$ be a local minimum point of the locally Lipschitz function $f$ and $u\in\R^n$ with norm $1$ such that $\dini f(\bar x;u)=0$. Then $0\in\partial_u f(\bar x)$, and
$\sld f(\bar x;0;u)\ge 0$.
\end{proposition}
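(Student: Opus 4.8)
The plan is to treat the two assertions separately, deriving the second from the necessary conditions of Theorem \ref{th1} and reserving the genuine work for the first. Since $\bar x$ is a local minimizer we have $f(\bar x+tu)\ge f(\bar x)$ for all small $t>0$, while the hypothesis $\dini f(\bar x;u)=0$ produces a sequence $t_k\downarrow 0$ for which $\eta_k:=t_k^{-1}[f(\bar x+t_ku)-f(\bar x)]\to 0$ with $\eta_k\ge 0$ (we may assume $\eta_k>0$, since $\eta_k=0$ means $\bar x+t_ku$ is itself a minimizer). Theorem \ref{th1} applied to the local minimizer $\bar x$ gives at once $0\in\lsubd 1 f(\bar x)$ and $0\in\lsubd 2 f(\bar x;0)$, the latter meaning $\ld 2 f(\bar x;0;v)\ge 0$ for every $v\in\R^n$; in particular $\ld 2 f(\bar x;0;u)\ge 0$. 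This inclusion is all the second-order information I will need.

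For the inclusion $0\in\partial_u f(\bar x)$ I would invoke Ekeland's variational principle on a closed ball $B=\{x\mid\norm{x-\bar x}\le r\}$ on which $\bar x$ is a global minimizer of $f$. The point $\bar x+t_ku$ is an $\eps_k$-minimizer of $f$ over $B$ with $\eps_k=\eta_k t_k$, so for a radius $\lambda_k>0$ Ekeland yields $w_k\in B$ with $\norm{w_k-(\bar x+t_ku)}\le\lambda_k$ at which $x\mapsto f(x)+(\eps_k/\lambda_k)\norm{x-w_k}$ is minimized over $B$. For large $k$ the point $w_k$ lies in the interior of $B$, so the Clarke sum rule furnishes some $\zeta_k\in\partial f(w_k)$ with $\norm{\zeta_k}\le\eps_k/\lambda_k$. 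The decisive step is the calibration of $\lambda_k$: choosing $\lambda_k=\sqrt{\eta_k}\,t_k$ makes $\lambda_k=o(t_k)$, so that $w_k=\bar x+t_ku+o(t_k)$ converges to $\bar x$ in the direction $u$, and simultaneously forces $\norm{\zeta_k}\le\eps_k/\lambda_k=\sqrt{\eta_k}\to 0$. Hence $w_k\to\bar x$ in direction $u$ and $\zeta_k\to 0$ with $\zeta_k\in\partial f(w_k)$, which is exactly the definition of $0\in\partial_u f(\bar x)$.

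With $0\in\partial_u f(\bar x)$ secured, the Chaney derivative $\sld f(\bar x;0;u)$ is well defined, and I would bound it below by $\ld 2 f(\bar x;0;u)$. Let $\{t_k\}$, $\{x_k\}$, $\{x_k^*\}$ be any triple admissible in Definition \ref{def5} with $x^*=0$; writing $u_k'=(x_k-\bar x)/t_k$ we have $u_k'\to u$, and since the term $x^*(x_k-\bar x)$ vanishes, the quotient in Chaney's definition equals $t_k^{-2}[f(\bar x+t_ku_k')-f(\bar x)]$. As $(t_k,u_k')$ is one admissible way of realizing the joint limit $t\downarrow 0$, $u'\to u$ in Definition \ref{def3}, the liminf along this triple is at least $\tfrac12\ld 2 f(\bar x;0;u)\ge 0$. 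Taking the infimum over all admissible triples gives $\sld f(\bar x;0;u)\ge 0$.

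The main obstacle is the first inclusion, not the second. A naive mean-value approach, applying Lebourg's theorem on the segments $[\bar x,\bar x+t_ku]$, only delivers a subgradient limit $x^*\in\partial_u f(\bar x)$ with $\scalpr{x^*}{u}=0$, which need not be the zero vector; the components of $x^*$ orthogonal to $u$ remain uncontrolled. The role of Ekeland's principle, together with the precise balance $\eta_k t_k\ll\lambda_k\ll t_k$, is precisely to drive the whole subgradient to zero while keeping the base points $w_k$ on an approach to $\bar x$ along $u$.
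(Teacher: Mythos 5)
Your proof is correct, but it takes a genuinely different route from the paper on the part that carries the real content, namely the inclusion $0\in\partial_u f(\bar x)$. The paper gets this inclusion for free: since $\bar x$ is a local minimizer, $\dini f(\bar x;v)\ge 0$ for all $v$, so Proposition \ref{pr7} (a Huang--Ng result quoted in the paper without proof) applies directly; the paper then invokes Lemma \ref{lema1} to identify Chaney's derivative $\sld f(\bar x;0;u)$ with $\ld 2 f(\bar x;0;u)$ and concludes by Theorem \ref{th1}. You instead prove the inclusion from scratch by Ekeland's variational principle, and your calibration is the right one: with $\eps_k=\eta_k t_k$ and $\lambda_k=\sqrt{\eta_k}\,t_k$ the perturbed minimizers $w_k$ satisfy $\norm{w_k-(\bar x+t_k u)}\le\lambda_k=o(t_k)$, hence converge to $\bar x$ in the direction $u$, while the subgradients $\zeta_k\in\partial f(w_k)$ obey $\norm{\zeta_k}\le\eps_k/\lambda_k=\sqrt{\eta_k}\to 0$; the degenerate indices with $\eta_k=0$ are handled correctly since then $\bar x+t_k u$ is an interior minimizer and one may take $\zeta_k=0\in\partial f(\bar x+t_k u)$. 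This amounts to a self-contained re-proof of exactly the special case of Proposition \ref{pr7} that is needed, which is the standard mechanism behind Chaney/Huang--Ng-type necessary conditions. For the second assertion your argument coincides in substance with the paper's, except that you prove only the one-sided bound $\sld f(\bar x;0;u)\ge\tfrac12\,\ld 2 f(\bar x;0;u)\ge 0$ (the liminf along any admissible Chaney triple dominates the full Hadamard liminf), which is all that is needed; your explicit factor $\tfrac12$ is in fact more careful than the paper, whose Lemma \ref{lema1} asserts equality while its own proof produces $\sld f(x;0;u)=0.5\,\ld 2 f(x;0;u)$. What your approach buys is independence from the unproved external Proposition \ref{pr7}; what the paper's approach buys is brevity, in line with its stated aim of exhibiting Huang--Ng's theorem as a quick corollary of Theorem \ref{th1}.
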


\begin{proposition}\label{pr7}
Let  $f:\R^n\to\R$ be a locally Lipschitz function. Suppose that  $\dini f(x;v)\ge 0$, for all $v\in\R^n$.
For $u\in\R^n$ with norm $1$, if $\dini f(x;u)=0$, then $0\in\partial_u f(x)$.
\end{proposition}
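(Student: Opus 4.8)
The plan is to exhibit $0$ as a limit of Clarke subgradients taken at points that approach $x$ in the direction $u$, where those points are produced as near-minimizers of $f$ on a shrinking family of balls centred on the ray $x+su$. The two hypotheses will enter at different places: $\dini f(x;u)=0$ locates the balls, while $\dini f(x;v)\ge 0$ for \emph{all} $v$ guarantees that $f$ cannot drop appreciably below $f(x)$ near $x$, which is what makes the centres approximate minimizers.

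First I would use local Lipschitzness to upgrade the pointwise hypothesis. For a Lipschitz function the lower Dini derivative coincides with the lower Hadamard derivative $\ld 1 f(x;v)$, since replacing the fixed direction $v$ by $v'\to v$ alters the difference quotient by at most $L\norm{v'-v}\to 0$. Hence $\dini f(x;v)\ge 0$ for every $v$ forces $\liminf_{y\to x}\norm{y-x}^{-1}(f(y)-f(x))\ge 0$: otherwise a sequence $y_k\to x$ with negative normalized limit, after passing to $(y_k-x)/\norm{y_k-x}\to v$, would make $\ld 1 f(x;v)$, and therefore $\dini f(x;v)$, negative. Equivalently, for each $\eta>0$ there is $r>0$ with $f(y)\ge f(x)-\eta\norm{y-x}$ whenever $\norm{y-x}<r$.

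Next, from $\dini f(x;u)=0$ I would choose $s_k\downarrow 0$ with $(f(x+s_ku)-f(x))/s_k\to 0$. The central estimate is that $x+s_ku$ is an $o(s_k)$-approximate minimizer of $f$ over a small ball $\bar B(x+s_ku,R_k)$: since every point $y$ of that ball satisfies $\norm{y-x}\le 2s_k$, the lower bound of the previous paragraph combined with $f(x+s_ku)=f(x)+o(s_k)$ yields $\eps_k:=f(x+s_ku)-\inf_{\bar B}f\le \mu_k s_k$ with $\mu_k\to 0$. I would then set $R_k:=\sqrt{\mu_k}\,s_k=o(s_k)$ and apply Ekeland's variational principle with radius $\lambda_k:=R_k/2$. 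This produces an interior point $z_k$ at which $f(\cdot)+(\eps_k/\lambda_k)\norm{\cdot-z_k}$ attains its minimum over the ball; Clarke's sum rule together with Fermat's rule (and $\partial\norm{\cdot-z_k}(z_k)=\bar B(0,1)$) then gives $z_k^*\in\partial f(z_k)$ with $\norm{z_k^*}\le \eps_k/\lambda_k\le 2\sqrt{\mu_k}\to 0$.

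Finally, since $\norm{z_k-(x+s_ku)}\le R_k=o(s_k)$, one has $(z_k-x)/\norm{z_k-x}\to u$, so the $z_k$ converge to $x$ in the direction $u$ while $z_k^*\to 0$; by the definition of $\partial_u f(x)$ this yields $0\in\partial_u f(x)$. I expect the main obstacle to be the quantitative balancing in the third step, namely choosing $R_k$ and $\lambda_k$ so that simultaneously $\eps_k/\lambda_k\to 0$ and the perturbation $\norm{z_k-(x+s_ku)}$ stays $o(s_k)$. This is exactly the point where the full hypothesis $\dini f(x;v)\ge 0$ for all $v$ is indispensable rather than merely $\dini f(x;u)=0$ along the single direction: it is what bounds $\eps_k$ by $o(s_k)$ and keeps the Ekeland error term from blowing up.
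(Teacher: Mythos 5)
Your argument is correct, but there is no proof in the paper to compare it against: Proposition \ref{pr7} is stated there as a quoted result of Huang and Ng \cite[Theorem 2.7]{hua94} and is used as a black box in the derivations of Propositions \ref{pr4} and \ref{pr5} from Theorems \ref{th1} and \ref{th4}. Your Ekeland-based proof is self-contained and sound: the upgrade from Dini to Hadamard derivatives via Lipschitz continuity, the resulting uniform lower bound $f(y)\ge f(x)-\eta\norm{y-x}$ near $x$, the approximate minimality of $x+s_ku$ over balls of radius $o(s_k)$, and the Ekeland--Fermat--sum-rule step producing $z_k^*\in\partial f(z_k)$ with $\norm{z_k^*}\le 2\sqrt{\mu_k}$ all check out, and the balancing $R_k=\sqrt{\mu_k}\,s_k$ does exactly what is needed to keep $(z_k-x)/\norm{z_k-x}\to u$ while the penalty coefficient tends to $0$. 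Two points of hygiene should be fixed in the write-up. (i) Define $\mu_k$ \emph{before} choosing $R_k$, e.g. $\mu_k:=|f(x+s_ku)-f(x)|/s_k+2\sup\{\max(0,\,f(x)-f(y))/\norm{y-x}\;:\;0<\norm{y-x}\le 2s_k\}$; since every point of $\bar B(x+s_ku,R_k)$ lies in $\bar B(x,2s_k)$ whenever $R_k\le s_k$, the bound $\eps_k\le\mu_k s_k$ then holds for \emph{every} such radius, which removes the apparent circularity of defining $\eps_k$ via a ball whose radius depends on $\mu_k$. (ii) Replace $\mu_k$ by $\max\{\mu_k,1/k\}$ so that it is strictly positive: Ekeland's principle requires $\eps>0$, and your estimate $\eps_k/\lambda_k\le 2\sqrt{\mu_k}$ degenerates when $\mu_k=0$ (equivalently, apply Ekeland with $\eps=\max\{\mu_k,1/k\}\,s_k$, which only requires $f(x+s_ku)\le\inf f+\eps$). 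With these cosmetic repairs the proof is complete, and it has the added value of making Section \ref{s8} self-contained, since Proposition \ref{pr7} is the one ingredient there that the paper imports without proof.
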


\begin{proposition}\label{pr5}
Let  $f:\R^n\to\R$ be a locally Lipschitz function. Suppose that 
\[
\dini f(\bar x;v)\ge 0,\quad\forall v\in\R^n,\; v\ne 0.
\]
If $\sld f(\bar x;0;u)>0$ for all unit vectors $u\in\R^n$ for which $\dini f(\bar x;u)=0$, then $\bar x$ is a strict local minimizer.
\end{proposition}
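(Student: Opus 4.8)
The plan is to deduce the proposition from Theorem~\ref{th4}, so the work lies entirely in translating the Dini/Chaney hypotheses into the Hadamard quantities of that theorem. First I would note that for a locally Lipschitz $f$ the lower Hadamard derivative coincides with the lower Dini derivative: if $L$ is a Lipschitz constant near $\bar x$, then $|f(\bar x+tu^\pr)-f(\bar x+tu)|\le L\,t\,\norm{u^\pr-u}$, so letting $u^\pr\to u$ in the defining $\liminf$ gives $\ld 1 f(\bar x;u)=\dini f(\bar x;u)$ for every $u$. Hence the standing assumption $\dini f(\bar x;v)\ge 0$ $(v\ne 0)$ is exactly $\ld 1 f(\bar x;v)\ge 0$, i.e. $0\in\lsubd 1 f(\bar x)$, which is the first half of (\ref{4}). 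Fixing a unit direction $u$, if $\ld 1 f(\bar x;u)=\dini f(\bar x;u)>0$ the first alternative in Theorem~\ref{th4} holds outright; the only directions left to treat are the critical ones, $\dini f(\bar x;u)=0$, and for these I would take $n(u)=2$, so that (\ref{4}) already holds and it remains to secure the strict inequality $\ld 2 f(\bar x;0;u)>0$.

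Everything therefore reduces to the implication: for a unit $u$ with $\dini f(\bar x;u)=0$, the Chaney condition $\sld f(\bar x;0;u)>0$ forces $\ld 2 f(\bar x;0;u)>0$. I would stress at once that the obvious comparison is useless here, for writing a Chaney triple as $x_k=\bar x+t_ku_k^\pr$ yields only
\[
\ld 2 f(\bar x;0;u)\le 2\,\sld f(\bar x;0;u),
\]
the inequality running the wrong way because the Chaney infimum ranges over the \emph{smaller} family of sequences admitting Clarke subgradients $x_k^*\to 0$. So I cannot merely quote an estimate; instead I would argue by contradiction and manufacture the missing subgradients.

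Suppose $\bar x$ is not a strict local minimizer. Then there are $x_k\to\bar x$, $x_k\ne\bar x$, with $f(x_k)\le f(\bar x)$; set $t_k=\norm{x_k-\bar x}$. To produce subgradients I would minimize $f$ over a closed ball $\bar B(\bar x,r_k)$ with $t_k<r_k\to 0$: an interior minimizer $z_k$ satisfies $0\in\partial f(z_k)$ outright, and in general one invokes Ekeland's variational principle at $x_k$, whose gap $\eps_k=f(x_k)-\inf_{\bar B(\bar x,r_k)}f$ tends to $0$ by continuity of $f$, to obtain $z_k$ near $x_k$ carrying $z_k^*\in\partial f(z_k)$ with $\norm{z_k^*}\to 0$ and $f(z_k)\le f(x_k)\le f(\bar x)$. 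Passing to a subsequence so that $(z_k-\bar x)/\norm{z_k-\bar x}\to e$ with $\norm{e}=1$, the triple $\{\norm{z_k-\bar x}\}$, $\{z_k\}$, $\{z_k^*\}$ is admissible in Definition~\ref{def5} with limiting subgradient $0$, whence $\sld f(\bar x;0;e)\le\liminf_k[f(z_k)-f(\bar x)]/\norm{z_k-\bar x}^2\le 0$. Moreover $e$ is critical: $\dini f(\bar x;e)=\ld 1 f(\bar x;e)\ge 0$ by hypothesis, while $\ld 1 f(\bar x;e)\le\liminf_k[f(z_k)-f(\bar x)]/\norm{z_k-\bar x}\le 0$ gives the reverse bound, so $\dini f(\bar x;e)=0$. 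Then $e$ falls under the hypothesis $\sld f(\bar x;0;e)>0$, contradicting $\sld f(\bar x;0;e)\le 0$; hence $\bar x$ is a strict local minimizer and the second-order alternative of Theorem~\ref{th4} is vindicated along every critical direction.

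The hard part will be the bookkeeping in the subgradient construction. I must choose the radius $r_k$ (equivalently, the Ekeland parameter) so that the produced point $z_k$ genuinely remains in the ball, is distinct from $\bar x$, carries a Clarke subgradient tending to $0$, and converges to $\bar x$ along a bona fide direction $e$; the delicate interaction is between Ekeland's displacement and subgradient estimates $\norm{z_k-x_k}\le\sqrt{\eps_k}$, $\norm{z_k^*}\le\sqrt{\eps_k}$, and the quadratic scale $t_k^2$ controlling the second-order quotient. This is precisely where local Lipschitz continuity (boundedness of $\partial f$), the first-order assumption, and the scaling of $\eps_k$ against $t_k^2$ must be balanced, and it is the step on which I would spend almost all the effort.
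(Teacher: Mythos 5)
Your proposal starts with the same reduction as the paper (Lipschitzness gives $\ld 1 f(\bar x;\cdot)=\dini f(\bar x;\cdot)$, hence $0\in\lsubd 1 f(\bar x)$, and only critical directions need a second-order argument), but then it diverges completely. The paper closes the critical-direction case in three lines by quoting its Proposition \ref{pr7} (the first-order hypothesis plus $\dini f(\bar x;u)=0$ already yields $0\in\partial_u f(\bar x)$) and Lemma \ref{lema1} (which identifies $\sld f(\bar x;0;u)$ with $\ld 2 f(\bar x;0;u)$ up to a positive factor once $0\in\partial_u f(\bar x)$ and $0\in\lsubd 1 f(\bar x)$) --- i.e.\ precisely the reverse comparison you declared unobtainable --- and then invokes Theorem \ref{th4}. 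You instead abandon the reduction and argue by contradiction, manufacturing Clarke subgradients via Ekeland's principle along a sequence witnessing failure of strict minimality; note that this makes your closing appeal to Theorem \ref{th4} vestigial: once the contradiction is reached you have proved the proposition directly, and you never verify the hypotheses of that theorem. Your route does buy something: it is self-contained (it needs neither Lemma \ref{lema1} nor Proposition \ref{pr7}, since the triple you construct exhibits $0\in\partial_e f(\bar x)$ on its own), at the price of variational machinery.

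There is, however, a genuine gap exactly at the step you defer, and the estimates you actually wrote down would fail. You claim the Ekeland gap $\eps_k=f(x_k)-\inf_{\bar B(\bar x,r_k)}f$ tends to $0$ ``by continuity of $f$,'' and then use the symmetric bounds $\norm{z_k-x_k}\le\sqrt{\eps_k}$, $\norm{z_k^*}\le\sqrt{\eps_k}$. Continuity (even Lipschitzness) only gives $\eps_k\le Lr_k$, so $\sqrt{\eps_k}$ can be of order $\sqrt{Lr_k}\gg r_k$: nothing then keeps $z_k$ in the interior of the ball, and with any admissible displacement $\lambda_k<r_k-t_k$ the subgradient bound $\eps_k/\lambda_k$ need not tend to $0$ at all (it can stay $\ge L$). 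What rescues the construction --- and is absent from your proposal --- is that the first-order hypothesis improves the dip estimate: since $\ld 1 f(\bar x;v)=\dini f(\bar x;v)\ge 0$ for all $v$, a sequential compactness argument on the unit sphere (the same device used in the paper's proofs of Theorems \ref{th4} and \ref{th2}) upgrades this pointwise condition to the uniform bound $f(y)\ge f(\bar x)-\eps(r)\norm{y-\bar x}$ on $\bar B(\bar x,r)$ with $\eps(r)\to 0$ as $r\downarrow 0$. Taking $r_k=2t_k$ gives $\eps_k\le 2\eps(2t_k)\,t_k=o(t_k)$, and then the \emph{asymmetric} Ekeland parameter $\lambda_k=\sqrt{\eps_k t_k}=o(t_k)$ produces an interior point $z_k\ne\bar x$ carrying $z_k^*\in\partial f(z_k)$ with $\norm{z_k^*}\le\sqrt{\eps_k/t_k}\to 0$ and $f(z_k)\le f(x_k)\le f(\bar x)$; after that your limiting argument ($\dini f(\bar x;e)=0$ and $\sld f(\bar x;0;e)\le 0$) goes through and contradicts the hypothesis. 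So the idea is sound and completable, but the decisive ingredient --- the uniform $o(\norm{y-\bar x})$ lower bound extracted from the first-order hypothesis, without which Ekeland yields nothing --- is missing, and the bookkeeping as sketched does not close.
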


\begin{lemma}\label{lema1}
Let $f:\R^n\to\R$ be a locally Lipschitz function. Suppose that $x\in\R^n$ and $u\in\R^n$. If $0\in\partial f_u(x)$ and $0\in\lsubd 1 f(x)$, then $\sld f(x;0;u)=\ld 2 f(x;0;u)$.
\end{lemma}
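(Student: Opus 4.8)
The plan is to show that, once the base subgradients are both taken to be $0$, the two derivatives are formed from the \emph{same} second-order difference quotient $[f(x_k)-f(x)]/t_k^2$, and that the only genuine difference between them is the extra companion-subgradient requirement (c) in Chaney's Definition~\ref{def5}. First I would record that both objects are meaningful under the hypotheses: the assumption $0\in\partial_u f(x)$ guarantees that the infimum defining $\sld f(x;0;u)$ is taken over a nonempty family of triples, while $0\in\lsubd 1 f(x)$ is exactly the requirement in Definition~\ref{def3} (with $n=2$) that makes $\ld 2 f(x;0;u)$ well defined. With $x^*=0$ (so that $x^*(x_k-x)=0$) and $x_1^*=0$ (so that the linear correction vanishes), Chaney's quotient is $[f(x_k)-f(x)]/t_k^2$ and the Hadamard quotient is the same expression written in the variables $t$ and $u^\pr$.

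For the easy inequality $\sld f(x;0;u)\ge\ld 2 f(x;0;u)$, I would reparametrize each admissible Chaney triple $(t_k,x_k,x_k^*)$ by setting $u_k^\pr:=(x_k-x)/t_k$. Condition (b) of Definition~\ref{def5} gives $u_k^\pr\to u$ with $t_k\downarrow 0$, so $(t_k,u_k^\pr)$ is an admissible pair for the Hadamard liminf, and the Chaney quotient along $(t_k,x_k,x_k^*)$ is exactly the quotient appearing in the Hadamard liminf along $(t_k,u_k^\pr)$. Since $\ld 2 f(x;0;u)$ is the infimum, over \emph{all} such radial pairs, of the liminf of this quotient, while Chaney's triples use only the subfamily of pairs that additionally carry a companion subgradient sequence tending to $0$, passing to the infimum yields $\sld f(x;0;u)\ge\ld 2 f(x;0;u)$.

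The reverse inequality is the heart of the matter. Here I would start from a radial sequence $x_k=x+t_k u_k^\pr$, with $u_k^\pr\to u$ and $t_k\downarrow 0$, along which the Hadamard quotient converges to $\ld 2 f(x;0;u)$, and try to promote it to an admissible Chaney triple by attaching subgradients $x_k^*\in\partial f(x_k)$ with $x_k^*\to 0$. Local Lipschitzness makes each $\partial f(x_k)$ nonempty and the selection $\{x_k^*\}$ bounded, and upper semicontinuity of the Clarke subdifferential forces every cluster point of $\{x_k^*\}$ into $\partial_u f(x)\subseteq\partial f(x)$. Finiteness of the second-order quotient forces $[f(x_k)-f(x)]/t_k\to 0$, which together with $0\in\lsubd 1 f(x)$ (so that $\ld 1 f(x;u)\ge 0$) and Lebourg's mean-value theorem pins the relevant first-order pairings against $u$ to $0$. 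The hypothesis $0\in\partial_u f(x)$ then supplies a second sequence along direction $u$ whose subgradients tend to $0$, and a diagonal argument interlacing it with the optimal radial sequence produces a genuine Chaney triple whose quotient still converges to $\ld 2 f(x;0;u)$; taking the infimum gives $\sld f(x;0;u)\le\ld 2 f(x;0;u)$.

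I expect the diagonal/perturbation step to be the main obstacle. The cluster points of the natural subgradient selection along the optimal radial sequence need not equal $0$; they merely lie in $\partial_u f(x)$ with vanishing pairing against $u$, so one cannot simply assert condition (c). Arranging (c) while keeping the second-order quotient unchanged is precisely where \emph{both} hypotheses $0\in\partial_u f(x)$ and $0\in\lsubd 1 f(x)$ must be used in tandem, and the local Lipschitz modulus is what will be needed to bound the error introduced by the perturbation.
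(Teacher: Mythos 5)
Your first inequality is correct, and your framing is in fact more careful than the paper's own proof: with $x^*=0$ and $x_1^*=0$, every Chaney triple $(t_k,x_k,x_k^*)$ induces, via $u_k^\pr=(x_k-x)/t_k$, an admissible pair for the Hadamard lower limit, so the infimum in Definition \ref{def5} runs over a subfamily and $\sld f(x;0;u)\ge\liminf_{t\downarrow 0,\,u^\pr\to u}[f(x+tu^\pr)-f(x)]/t^2$. Your observations that $0\in\partial_u f(x)$ makes this family nonempty and that $0\in\lsubd 1 f(x)$ is what makes $\ld 2 f(x;0;u)$ well defined are also right. One caveat: by Definition \ref{def3} the second-order Hadamard quotient carries the factor $2!$, so the lower limit above equals $0.5\,\ld 2 f(x;0;u)$, not $\ld 2 f(x;0;u)$; your claim that the two quotients are ``the same expression'' is off by this factor. (The paper's own proof ends with exactly this $0.5$, in conflict with the statement of the lemma; the discrepancy is harmless for the sign assertions in Propositions \ref{pr4} and \ref{pr5}.)

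The genuine gap is the reverse inequality, and your closing paragraph concedes that you have not closed it. The interlacing idea cannot work as described: condition (c) of Definition \ref{def5} constrains the \emph{entire} sequence $\{x_k^*\}$, so a sequence spliced together from the near-optimal radial points (whose subgradient selections may cluster only at nonzero elements of $\partial_u f(x)$) and the points supplied by $0\in\partial_u f(x)$ has a subgradient sequence that does not converge to $0$, hence is not a Chaney triple; while discarding the radial points leaves a sequence along which you have no control of the second-order quotient. Knowing that every cluster point $x^*$ of your selection satisfies $x^*(u)=0$ (the Lebourg step) does not force $x^*=0$. For comparison, the paper's proof never confronts this issue: it simply re-reads Chaney's infimum as the lower limit over all pairs satisfying (a) and (b), treating (c) as no restriction at all, without argument and without ever using either hypothesis of the lemma --- so the step you call ``the heart of the matter'' is precisely the step the published proof passes over in silence. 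What would actually close this direction is a variational argument absent from both your sketch and the paper: when $L:=\liminf_{t\downarrow 0,u^\pr\to u}[f(x+tu^\pr)-f(x)]/t^2$ is finite, one has $f(x+tv)-f(x)\ge(L-\eps)t^2$ for all small $t$ and all $v$ near $u$, and applying Ekeland's variational principle to $g(y)=f(y)-f(x)-(L-\eps)\norm{y-x}^2$ on balls $B(x_k,\theta t_k)$ centred at near-optimal radial points produces points that carry Clarke subgradients tending to zero (they are approximate local minimizers of a slightly perturbed $g$), keep the quotient below $L+O(\eps)$, and deviate from the direction $u$ by $O(\theta)$; diagonalizing in $\eps$ and $\theta$ then yields an admissible Chaney triple realizing $L$. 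Without an argument of this kind, your proposal establishes only one of the two inequalities.
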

\begin{proof}
Denote $u_k=(x_k-x)/t_k$. Then
\[
\sld f(x;0;u)=\liminf\,[f(x+t_k u_k)-f(x)]/t^2_k,
\]
where the limes infimum is taken over all pairs of sequences $\{t_k\}$, $\{u_k\}$ which satisfy Conditions (a) and (b) from Definition \ref{def5}. It follows from here that
\[
\sld f(x;0;u)=\liminf_{t\downarrow 0,u^\pr\to u}\,[f(x+t u^\pr)-f(x)]/t^2=0.5\,\ld 2 f(x;0;u), \qedhere
\]
%which implies that the claim is satisfied.
\end{proof}

\bigskip
{\it Proof of Proposition \ref{pr5} as corollary of Theorem \ref{th4}.}
 
Since $f$ is locally Lipschitz, then $\ld 1 f(\bar x;v)=\dini f(\bar x;v)\ge 0$ for all directions $v\in\R^n$. Therefore $0\in\lsubd 1 f(\bar x)$. Suppose that
$\dini f(\bar x;u)=0$ for some unit direction $u$. It follows from Lemma \ref{lema1} that 
\[
\ld 2 f(\bar x;0;u)=\sld f(\bar x;0;u)>0,
\]
 because by Proposition \ref{pr7} we have $0\in\partial_u f(\bar x)$. Then, according to Theorem \ref{th4} the point $\bar x$ is a strict local minimizer.
\qed 
\bigskip

It is seen that our proof is shorter than the proof of Huang and Ng \cite{hua94}.

\bigskip
{\it Proof of Proposition \ref{pr4} as corollary of Theorem \ref{th1}.}\;

Let $\dini f(\bar x;u)=0$ for some unit direction $u$. By Proposition \ref{pr7}, we have $0\in\partial_u f(\bar x)$.
Then, by Lemma \ref{lema1},  $\sld f(\bar x;0;u)=\ld 2 f(\bar x;0;u)$. Thus the claim follows from Theorem \ref{th1}.
\qed
\bigskip

Ben-Tal and Zowe introduced the following second-order derivative of a function $f:\E\to\R$ at the point $x\in\E$ in directions $u\in\E$ and $z\in\E$:
\[
f^{\pr\pr}_{BZ}(x;u,z):=\lim_{t\downarrow 0}\, t^{-2}[f(x+tu+t^2 z)-f(x)-t f^{\pr}(x;u)],
\]
where $f^{\pr}(x;u):=\lim_{t\downarrow 0}\, t^{-1}[f(x+tu)-f(x)]$ is the usual directional derivative of first-order. 

The following conditions are necessary for a local minimum in terms of the derivative of Ben-Tal and Zowe \cite{bt85}:
\begin{proposition}
Let $\bar x$ be a local minimizer of $f:\E\to\R$. Then
$$
f^\pr(\bar x;u)\ge 0,\quad\forall u\in\E,\eqno{\rm (BZ_1)}
$$
$$
f^\pr(\bar x;u)=0\quad\Rightarrow\quad f^{\pr\pr}_{BZ}(\bar x;u,z)\ge 0,\;\forall z\in\E.\eqno{\rm (BZ_2)}
$$
\end{proposition}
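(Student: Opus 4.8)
The plan is to obtain both (BZ$_1$) and (BZ$_2$) directly from Theorem \ref{th1} by comparing the derivatives of Ben-Tal and Zowe with the lower Hadamard derivatives of orders one and two at $\bar x$. Since $\bar x$ is a local minimizer, Theorem \ref{th1} already furnishes $0\in\lsubd 1 f(\bar x)$ and $0\in\lsubd 2 f(\bar x;0)$, that is $\ld 1 f(\bar x;u)\ge 0$ and $\ld 2 f(\bar x;0;u)\ge 0$ for every $u\in\E$; the whole task is therefore to transport these two inequalities to the Ben-Tal--Zowe objects. Throughout I work under the tacit assumption of the Ben-Tal--Zowe framework that the one-sided limits $f^\pr(\bar x;u)$ and $f^{\pr\pr}_{BZ}(\bar x;u,z)$ exist, so that every $\liminf$ below that is taken along the full BZ path is in fact a limit.

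For (BZ$_1$) I would first note that restricting the pair $(t,u^\pr)$ in the definition of $\ld 1 f(\bar x;u)$ to the constant choice $u^\pr=u$ can only raise the $\liminf$, whence $\ld 1 f(\bar x;u)\le\dini f(\bar x;u)=f^\pr(\bar x;u)$, the last equality holding because $f^\pr(\bar x;u)$ exists. Combining this with $\ld 1 f(\bar x;u)\ge 0$ from Theorem \ref{th1} yields $f^\pr(\bar x;u)\ge 0$. (Equivalently, local minimality gives $f(\bar x+tu)\ge f(\bar x)$ for small $t>0$, and the claim is immediate after dividing by $t$ and letting $t\downarrow 0$.)

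The substance is (BZ$_2$), and the key observation is that the parabolic test path of Ben-Tal and Zowe is a special Hadamard perturbation: writing $u^\pr=u+tz$ one has $tu^\pr=tu+t^2z$ and $u^\pr\to u$ as $t\downarrow 0$, so that $\bar x+tu+t^2z$ equals $\bar x+tu^\pr$ for an admissible direction $u^\pr$. Hence, for a direction $u$ with $f^\pr(\bar x;u)=0$ and any $z\in\E$, restricting the $\liminf$ defining $\ld 2 f(\bar x;0;u)$ to this single curve can only increase it, giving
\[
\ld 2 f(\bar x;0;u)\le\liminf_{t\downarrow 0}\,2t^{-2}[f(\bar x+tu+t^2z)-f(\bar x)]=2\,f^{\pr\pr}_{BZ}(\bar x;u,z),
\]
where the last equality uses $f^\pr(\bar x;u)=0$ (so the subtracted term $t f^\pr(\bar x;u)$ vanishes) together with the existence of the Ben-Tal--Zowe limit. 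Since Theorem \ref{th1} delivers $\ld 2 f(\bar x;0;u)\ge 0$, this chain forces $f^{\pr\pr}_{BZ}(\bar x;u,z)\ge 0$, which is precisely (BZ$_2$).

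The only delicate point is the monotonicity of the $\liminf$ under shrinking the family of admissible approach directions, together with bookkeeping of the normalizing factor $2$ in the lower second-order Hadamard derivative against the factor $1$ in $f^{\pr\pr}_{BZ}$; since that factor is positive it does not disturb the sign, so no genuine obstacle remains once the reparametrization $u^\pr=u+tz$ has been spotted.
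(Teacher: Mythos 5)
Your proof is correct and follows essentially the same route as the paper: the paper also deduces $({\rm BZ_1})$ from $f^\pr(\bar x;u)\ge\ld 1 f(\bar x;u)\ge 0$ and $({\rm BZ_2})$ by recognizing the parabolic path $\bar x+tu+t^2z=\bar x+t(u+tz)$ as an admissible Hadamard perturbation, giving $f^{\pr\pr}_{BZ}(\bar x;u,z)\ge 0.5\,\ld 2 f(\bar x;0;u)\ge 0$, with the nonnegativity of the lower Hadamard derivatives supplied by Theorem \ref{th1}. The only cosmetic difference is that the paper factors the argument through the intermediate proposition that Conditions (\ref{14}) imply $({\rm BZ_1})$ and $({\rm BZ_2})$, whereas you merge the two steps into one.
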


In the next result, we prove that Conditions (BZ$_1$) and (BZ$_2$) are consequence of (\ref{14}):
\begin{proposition}
Let $f:\E\to\R$ and $\bar x\in\E$ be a given function and a point respectively,  such that the derivatives $ f^\pr(\bar x;u)$ and $f^{\pr\pr}_{BZ}(\bar x;u,z)$ exist for all directions $u\in\E$ and $z\in\E$. Then Conditions (\ref{14}) imply that $({\rm BZ_1})$ and $({\rm BZ_2})$ are satisfied at $\bar x$.
\end{proposition}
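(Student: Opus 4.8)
The plan is to derive each condition from the corresponding low-order instance of (\ref{14}) (equivalently (\ref{15})): the first-order part gives $\ld 1 f(\bar x;u)\ge 0$ for every $u\in\E$, and the $n=2$ part gives $\ld 2 f(\bar x;0;u)\ge 0$ for every $u\in\E$. The higher-order inclusions are not needed.

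For (BZ$_1$) I would first record that the lower Hadamard derivative never exceeds the ordinary directional derivative. Indeed, $\ld 1 f(\bar x;u)$ is a liminf of the quotients $t^{-1}[f(\bar x+tu^\pr)-f(\bar x)]$ over all pairs $(t,u^\pr)$ with $t\downarrow 0$ and $u^\pr\to u$, whereas freezing $u^\pr\equiv u$ produces exactly the limit $f^\pr(\bar x;u)$, which exists by hypothesis. A liminf over a larger family of approaching sequences cannot be larger, so $\ld 1 f(\bar x;u)\le f^\pr(\bar x;u)$. Together with $\ld 1 f(\bar x;u)\ge 0$ this yields $f^\pr(\bar x;u)\ge 0$ for all $u$, which is (BZ$_1$).

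For (BZ$_2$), fix a direction $u$ with $f^\pr(\bar x;u)=0$ and an arbitrary $z\in\E$. Since $f^\pr(\bar x;u)=0$, the Ben-Tal and Zowe derivative reduces to $f^{\pr\pr}_{BZ}(\bar x;u,z)=\lim_{t\downarrow 0}t^{-2}[f(\bar x+tu+t^2z)-f(\bar x)]$. The crux is the substitution $u^\pr=u+tz$: then $\bar x+tu+t^2z=\bar x+tu^\pr$ and $u^\pr\to u$ as $t\downarrow 0$, so the curve $t\mapsto(t,u+tz)$ is admissible in the liminf defining $\ld 2 f(\bar x;0;u)$. Consequently the liminf over all admissible pairs is at most the value along this single curve, giving
\[
\tfrac12\,\ld 2 f(\bar x;0;u)\le\liminf_{t\downarrow 0}\,t^{-2}[f(\bar x+tu+t^2z)-f(\bar x)]=f^{\pr\pr}_{BZ}(\bar x;u,z),
\]
where the last equality uses that the right-hand limit exists. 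As $\ld 2 f(\bar x;0;u)\ge 0$ by (\ref{14}), we obtain $f^{\pr\pr}_{BZ}(\bar x;u,z)\ge 0$, i.e. (BZ$_2$).

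The main conceptual step --- rather than a genuine obstacle --- is recognizing that the second-order perturbation $t^2z$ in the Ben-Tal and Zowe quotient is exactly absorbed by letting the direction vary as $u^\pr=u+tz$, so that the whole difference quotient becomes a particular Hadamard difference quotient of the same order. After this identification both conditions drop out of the sign information in (\ref{14}); one only has to check that the hypothesis that $f^\pr(\bar x;u)$ and $f^{\pr\pr}_{BZ}(\bar x;u,z)$ exist is invoked precisely at the places where a liminf is replaced by an honest limit.
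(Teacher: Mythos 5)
Your proof is correct and follows essentially the same route as the paper's: both derive (BZ$_1$) from $f^\pr(\bar x;u)\ge\ld 1 f(\bar x;u)\ge 0$, and both obtain (BZ$_2$) by recognizing $\bar x+tu+t^2z=\bar x+t(u+tz)$ as an admissible Hadamard curve with $u^\pr=u+tz\to u$, so that $f^{\pr\pr}_{BZ}(\bar x;u,z)\ge\tfrac12\,\ld 2 f(\bar x;0;u)\ge 0$. Your writeup is in fact slightly more careful, since it makes explicit where the existence hypotheses on the limits are used.
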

\begin{proof}
Suppose that (\ref{14}) holds. Then the inequality $f^\pr(\bar x;u)\ge\ld 1 f(\bar x;u)\ge 0$ implies that (BZ$_1$) is satisfied. Let  $f^\pr(\bar x;u)=0$. Then the chain of relations
\[
f^{\pr\pr}_{BZ}(\bar x;u,z)=\lim_{t\downarrow 0} t^{-2}[f(\bar x+t(u+tz))-f(\bar x)]
%\]
%\[
\ge\liminf_{t\downarrow 0,u^\pr\to u} t^{-2}[f(\bar x+tu^\pr)-f(\bar x)]=
0.5\,\ld 2 f(\bar x;0;u)\ge 0 
\]
show that (BZ$_2$) is also satisfied for arbitrary $z\in\E$.
\end{proof}

Examples 2.1 and 2.2 in \cite{stu86} show that the optimality conditions given there cannot solve arbitrary set constrained problem.
The next example was given in \cite{stu86}:
\begin{example}
Consider the problem
\[
\inf\{f(x)\mid x\in C\},
\]
where the function $f:\R^2\to\R$ is defined by
\[
f(x_1,x_2)=\left\{
\begin{array}{ll}
|x_1|^n, & \textrm{if}\quad x_2\ne 0;\\
0, & \textrm{if}\quad x_2=0,
\end{array}\right.
\]
and $C=\R\times\{0\}$. Here $n$ is a positive integer. It is shown in \cite[Example 2.2]{stu86} that $\bar x=(0,0)$ is an isolated local minimizer of order $n$ over the set $C$, but the sufficient conditions in this paper cannot establish this fact, because the required derivatives are identical to $0$.

The derivatives that we study in the present work can solve the problem.
Consider the function $g$  such that
\[
g(x)=\left\{
\begin{array}{ll}
f(x), & \textrm{if}\quad x\in C;\\
+\infty, & \textrm{if}\quad x\notin C.
\end{array}\right.
\]
We obviously have 
\[
\bar x\in {\rm Argmin}\,\{f(x)\mid x\in C\}\quad\Longleftrightarrow\quad \bar x\in {\rm Argmin}\,\{g(x)\mid x\in\E\}.
\]
We prove that $\bar x$ is an isolated minimizer of order $n$. Let $u\in\R^2$ be an arbitrary direction such that $\norm{u}=1$.
Then $\ld 1 g(\bar x;u)=0$ and $\ld i g(\bar x;0,\dots,0;u)=0$ if $u=(\pm 1,0)$ for $i<n$. We have $\ld 1 g(\bar x;u)=+\infty$ and $\ld i g(\bar x;0,\dots,0;u)=+\infty$ if $u\ne (\pm 1,0)$ for $i<n$. Moreover, $0\in\lsubd 1 g(\bar x)$, $0\in\lsubd i g(\bar x,0,\dots,0)$ for $i<n$. At last, we obtain that $\ld n g(\bar x;0,\dots,0;u)=n!$ if $u=(\pm 1,0)$ and $\ld n g(\bar x;0,\dots,0;u)=+\infty$ if $u\ne (\pm 1,0)$.
 %$0\in\lsubd 1 f(\bar x)$, $\ld 2 f(\bar x;u;0)=2u_1^2$ if $u=(\pm 1,0)$, and  $\ld 2 f(\bar x;u;0)=+\infty$ if $u\ne (\pm 1,0)$. 
 It follows from the sufficient conditions in Theorem \ref{th2} that $\bar x$ is an isolated local minimizer of order $n$.
\end{example}

Ginchev \cite{gin02} introduced the following directional derivatives of Hadamard type. Let be given an proper extended real function $\f$. The derivatives begin with the derivative of  order $0$:
\[
\gin 0 f(x;u):=\liminf_{t\downarrow 0,u^\pr\to u}\, f(x+t u^\pr).
\]
Let $n$ be a positive integer. Then the derivative of order $n$ is defined as follows: 
\[
\gin n f(x;u):=\liminf_{t\downarrow 0,u^\pr\to u}\,\frac{n!}{t^n}\,[f(x+t u^\pr)-\sum_{i=0}^{n-1}\frac{t^i}{i!}\gin i f(x;u)].
\]
In \cite[Theorem 1]{gin02} the author derived necessary optimality conditions of order $n$ for a local minimum under the assumption that the required derivatives exist. It is not discussed when  these derivatives exist.

The derivatives in \cite{aub90,gin02,stu86} are not consistent with the classical Fr\'echet derivatives. Really, for every twice Fr\'echet differentiable convex function %the second-order derivative 
$\gin 2 f(x;u)$ does not coincide with the second-order Fr\'echet directional derivative.  On the other hand, by Proposition \ref{pr3}, the derivatives introduced 
in Definition \ref{def3} are consistent with the classical ones. We can find only higher-order necessary conditions for a local minimum and second-order sufficient conditions for a weak local minimum in the book \cite[Section 6.6]{aub90}. 
\begin{example}
Consider the function $f:\R^2\to\R$ such that 
\[
f(x_1,x_2)=x_1^2+x_2^2.
\]
 Let $u=(u_1,u_2)$ be a direction. We have 
\[
\gin 0 f(x;u)=x_1^2+x_2^2,\quad \gin 1 f(x;u)=2x_1 u_1+2x_2 u_2.
\]
 Therefore,
\[
\gin 2 f(x;u)=2(u_1^2+u_2^2)+\liminf_{t\downarrow 0,u^\pr\to u}\,4 t^{-1}[x_1(u_1^\pr-u_1)+x_2(u_2^\pr-u_2)].
\]
If we take $t=t_k=1/k^2$, $u_2^\pr=u_2=0$, $u_1=1$, $u_1^\pr=1-1/k$, $x_1>0$, where $k$ is a positive integer with $k\to +\infty$, then we see that $\gin 2 f(x;u)=-\infty\ne\nabla^2 f(x)(u)(u)$.
\end{example}

This example also shows that the second-order contingent epi-derivative in \cite{aub90} is not consistent with the classical second-order derivative, and it equals $-\infty$ if we take the same values of the variables as in the example.

\end{document}